\documentclass[reqno,12pt]{amsart}
\usepackage{amscd,amsfonts,amssymb}
\usepackage[T2A]{fontenc}
\usepackage[cp1251]{inputenc}
\textwidth=15.5cm
\textheight=23.8cm
\topmargin=0.0cm
\oddsidemargin=1.0cm
\evensidemargin=1.0cm
\numberwithin{equation}{section}
\newtheorem{Theorem}{Theorem}[section]
\newtheorem{Lemma}{Lemma}[section]
\newtheorem{Corollary}{Corollary}[section]

\theoremstyle{definition}

\theoremstyle{remark}

\newtheorem{Example}{Example}[section]

\newcommand{\mes}{\mathop{\rm mes}\nolimits}

\author{A.A. Kon'kov}
\address{Department of Differential Equations,
Faculty of Mechanics and Mathematics,
Mo\-s\-cow Lo\-mo\-no\-sov State University,
Vorobyovy Gory,
Moscow, 119992 Russia.
}
\email{konkov@mech.math.msu.su}
\author{A.E. Shishkov}
\address{
Center of Nonlinear Problems of Mathematical Physics,
RUDN University,
Mi\-klu\-k\-ho-Mak\-la\-ya str. 6,
Moscow, 117198 Russia.
}
\email{aeshkv@yahoo.com}
\thanks{
}
\title[On blow-up conditions]{On blow-up conditions for solutions of systems of quasilinear second-order elliptic inequalities}
\keywords{Global solutions; Nonlinearity; Blow-up}
\subjclass{35B44, 35B08, 35J30, 35J70}
\date{}

\begin{document}

\begin{abstract}
We study systems of the differential inequalities
$$
	\left\{
		\begin{aligned}
			&
			- \operatorname{div} A_1 (x, \nabla u_1)
			\ge
			F_1 (x, u_2)
			&
			\mbox{in } {\mathbb R}^n,
			\\
			&
			- \operatorname{div} A_2 (x, \nabla u_2)
			\ge
			F_2 (x, u_1)
			&
			\mbox{in } {\mathbb R}^n,
		\end{aligned}
	\right.
$$
where $n \ge 2$ and $A_i$ are Caratheodory functions such that
$$
	C_1
	|\xi|^{p_i}
	\le
	\xi
	A_i (x, \xi),
	\quad
	|A_i (x, \xi)|
	\le
	C_2
	|\xi|^{p_i - 1},
	\quad
	i = 1,2,
$$ 
with some constants
$C_1, C_2 > 0$
and
$p_1, p_2 > 1$
for almost all
$x \in {\mathbb R}^n$
and for all
$\xi \in {\mathbb R}^n$,
$n \ge 2$. 
For non-negative solutions of these systems we obtain exact blow-up conditions.
\end{abstract}

\maketitle

\section{Introduction}

We consider systems of the differential inequalities
\begin{equation}
	\left\{
		\begin{aligned}
			&
			- \operatorname{div} A_1 (x, \nabla u_1)
			\ge
			F_1 (x, u_2)
			&
			\mbox{in } {\mathbb R}^n,
			\\
			&
			- \operatorname{div} A_2 (x, \nabla u_2)
			\ge
			F_2 (x, u_1)
			&
			\mbox{in } {\mathbb R}^n,
		\end{aligned}
	\right.
	\label{1.1}
\end{equation}
where $n \ge 2$ and $A_k$ are Caratheodory functions such that
\begin{equation}
	C_1
	|\xi|^{p_k}
	\le
	\xi
	A_k (x, \xi),
	\quad
	|A_k (x, \xi)|
	\le
	C_2
	|\xi|^{p_k - 1},
	\quad
	k = 1,2,
	\label{ec}
\end{equation}
with some constants
$C_1, C_2 > 0$
and
$p_1, p_2 > 1$
for almost all
$x \in {\mathbb R}^n$
and for all
$\xi \in {\mathbb R}^n$.
In so doing, the functions $F_1$ and $F_2$ are assumed to be non-negative on ${\mathbb R}^n \times [0, \infty)$, positive on ${\mathbb R}^n \times (0, \infty)$, and non-decreasing with respect to the last argument on the interval $[0, \varepsilon]$ for some real number $0 < \varepsilon < 1$.

An ordered pair 
$
	(u_1, u_2) 
	\in 
	W_{p_1, loc}^1 ({\mathbb R}^n) 
	\times 
	W_{p_2, loc}^1 ({\mathbb R}^n)
$ 
is called a solution of~\eqref{1.1} if 
$
	F_1 (x, u_2), F_2 (x, u_1) 
	\in 
	L_{1, loc} ({\mathbb R}^n)
$ 
and, moreover,
$$
	\int_{
		{\mathbb R}^n
	}
	A_1 (x, \nabla u_1) \nabla \varphi
	\,
	dx
	\ge
	\int_{
		{\mathbb R}^n
	}
	F_1 (x, u_2)
	\varphi
	\,
	dx
$$
and
$$
	\int_{
		{\mathbb R}^n
	}
	A_2 (x, \nabla u_2) \nabla \varphi
	\,
	dx
	\ge
	\int_{
		{\mathbb R}^n
	}
	F_2 (x, u_1)
	\varphi
	\,
	dx
$$
for any non-negative function $\varphi \in C_0^\infty ({\mathbb R}^n)$.

Without loss of generality, we can assume that solutions of~\eqref{1.1} satisfy the relations
\begin{equation}
	\operatorname*{ess\,inf}\limits_{
		{\mathbb R}^n
	}
	u_k
	=
	0,
	\quad
	k = 1, 2.
	\label{1.2}
\end{equation}
If this is not the case, we replace $u_k$ by $u_k - \alpha_k$, where
\begin{equation}
	\alpha_k
	=
	\operatorname*{ess\,inf}\limits_{
		{\mathbb R}^n
	}
	u_k,
	\quad
	k = 1, 2.
	\label{1.3}
\end{equation}
After this replacement,
the left-hand sides of~\eqref{1.1} obviously do not change and the right-hand sides transform to $F_1 (x, u_2 + \alpha_2)$ and $F_2 (x, u_1 + \alpha_1)$.

The absence of solutions of differential equations and inequalities, which is known as the blow-up phenomenon, has been studied by many authors~[1--14].
In most cases, these studies were limited to power-law nonlinearity.
In our paper, we consider the case of the general nonlinearity.
In so doing, we manage to strengthen results of~\cite{AM, ASComPDE}.

Note that the only relevant case is $n > \operatorname{max} \{ p_1, p_2 \}$. Indeed, if $n \le p_k$ for some $k \in \{ 1, 2 \}$, then any non-negative solution of the inequality
$$
	- \operatorname{div} A_k (x, \nabla u) \ge 0
	\quad
	\mbox{in } {\mathbb R}^n
$$
is a constant~\cite{MPbook}.

Below it is assumed that $\theta > 1$ is some given real number and
\begin{equation}
	f_k (r, \zeta)
	=
	\operatorname*{ess\,inf}\limits_{
		x \in B_{\theta r} \setminus B_{r / \theta}
	}
	F_k (x, \zeta),
	\quad
	r, \zeta > 0,
	\;
	k = 1, 2.
	\label{1.4}
\end{equation}
Let us denote by $B_r$ the open ball of radius $r > 0$ centered at zero.
We also assume that for any real numbers $\varepsilon_*, \zeta_* > 0$ there exist a real number $r_* > 0$ and functions
$q_k : [r_*, \infty) \to [0, \infty)$
and
$g_k : [\zeta_*, \infty) \to (0, \infty)$, $k = 1,2$,
satisfying the following conditions:
\begin{enumerate}
\item[$(a)$]
for all $r \in [r_*, \infty)$ and $\zeta \in [\zeta_*, \infty)$ such that
$$
	\left(
		\frac{
			\zeta
		}{
			r^{n - p_2}
		}
	\right)^{1 / (p_2 - 1)}
	\le
	\varepsilon
	\quad
	\mbox{and}
	\quad
	\varepsilon_*
	r^{p_1 / (p_1 - 1)}
	f_1^{1 / (p_1 - 1)}
	\left(
		r, 
		\left(
			\frac{
				\zeta
			}{
				r^{n - p_2}
			}
		\right)^{1 / (p_2 - 1)}
	\right)
	\le
	\varepsilon
$$
we have
\begin{equation}
	f_2 
	\left( 
		r,
		\varepsilon_*
		r^{p_1 / (p_1 - 1)}
		f_1^{1 / (p_1 - 1)}
		\left(
			r,
			\left(
				\frac{
					\zeta
				}{
					r^{n - p_2}
				}
			\right)^{1 / (p_2 - 1)}
		\right)
	\right)
	\ge
	q_1 (r) 
	g_1 (\zeta);
	\label{1.5}
\end{equation}
\item[$(b)$]
for all $r \in [r_*, \infty)$ and $\zeta \in [\zeta_*, \infty)$ such that
$$
	\left(
		\frac{
			\zeta
		}{
			r^{n - p_1}
		}
	\right)^{1 / (p_1 - 1)}
	\le
	\varepsilon
	\quad
	\mbox{and}
	\quad
	\varepsilon_*
	r^{p_2 / (p_2 - 1)}
	f_2^{1 / (p_2 - 1)}
	\left(
		r,
		\left(
			\frac{
				\zeta
			}{
				r^{n - p_1}
			}
		\right)^{1 / (p_1 - 1)}
	\right)
	\le
	\varepsilon
$$
we have
$$
	f_1
	\left(
		r,
		\varepsilon_*
		r^{p_2 / (p_2 - 1)}
		f_2^{1 / (p_2 - 1)}
		\left(
			r,
			\left(
				\frac{
					\zeta
				}{
					r^{n - p_1}
				}
			\right)^{1 / (p_1 - 1)}
		\right)
	\right)
	\ge
	q_2 (r) 
	g_2 (\zeta).
$$
\end{enumerate}

\section{Main results}

\begin{Theorem}\label{T2.1}
Suppose that $n > \operatorname{max} \{ p_1, p_2 \}$. Also let for any real numbers $\varepsilon_*, \zeta_* > 0$ there be a real number $r_* > 0$, non-decreasing functions 
$g_1, g_2 : [\zeta_*, \infty) \to (0, \infty)$, and locally bounded measurable functions 
$q_1, q_2 : [r_*, \infty) \to [0, \infty)$ such that $(a)$ and $(b)$ are valid and, moreover,
\begin{equation}
	\int_{\zeta_*}^\infty
	\frac{
		d\zeta
	}{
		g_k (\zeta)
	}
	<
	\infty
	\label{T2.1.1}
\end{equation}
and
\begin{equation}
	\int_{r_*}^\infty
	r^{n - 1}
	q_k (r)
	\,
	dr
	=
	\infty
	\label{T2.1.2}
\end{equation}
for some $k \in \{ 1, 2 \}$. Then any non-negative solution of~\eqref{1.1}, \eqref{1.2} is identically zero.
\end{Theorem}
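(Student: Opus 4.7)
The plan is to derive, from the weak form of (1.1), a pointwise coupling between the two unknowns that collapses to a scalar Osgood-type integral inequality, and then to contradict the latter using (T2.1.1)--(T2.1.2). By symmetry of the hypotheses (a) and (b) in the roles of the indices, I may assume that the integral conditions (T2.1.1)--(T2.1.2) hold with $k=1$, and that $(u_1,u_2)$ has been normalized so that (1.2) is in force. A key consequence of (1.2) is that $\essinf_{B_r}u_j\to 0$ as $r\to\infty$, which will be used repeatedly to guarantee that the relevant values of $u_j$ lie in the interval $[0,\varepsilon]$ on which $F_j(x,\cdot)$ is non-decreasing.

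The first step is to establish a pointwise coupling of the form
\[
	\essinf_{B_r}u_k
	\ \ge\ c_0\,r^{p_k/(p_k-1)}\,f_k^{1/(p_k-1)}\!\bigl(r,\essinf_{B_{\theta r}}u_{3-k}\bigr),
	\qquad k=1,2,
\]
valid for all sufficiently large $r$. This would be obtained by testing the $k$-th weak inequality against a radial cutoff $\varphi(x)=\psi(|x|/r)^s$ (with $s$ sufficiently large), exploiting the coercivity/growth hypotheses (1.2), applying H\"older's inequality, and invoking a Caccioppoli-type iteration or, equivalently, a nonlinear Wolff-potential lower bound for $p_k$-superharmonic functions --- this is where the subcriticality $n>p_k$ is essential. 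The non-decreasing hypothesis on $F_k(x,\cdot)$, together with the smallness of $u_{3-k}$ on the annulus $B_{\theta r}\setminus B_{r/\theta}$, allows one to replace $F_k(x,u_{3-k})$ by $f_k(r,\essinf_{B_{\theta r}}u_{3-k})$ inside the integral.

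Next I would introduce the scalar capacity-type quantity $\zeta(r):=r^{\,n-p_2}\bigl(\essinf_{B_r}u_2\bigr)^{p_2-1}$, substitute the $k=1$ bound into the $k=2$ bound, and invoke hypothesis (a) (whose two smallness constraints are satisfied for $r$ large by (1.2)) to obtain a discrete-scale estimate of the form
$\zeta(r/\theta^2)\ge c_1\,r^{\,n}\,q_1(r)\,g_1\!\bigl(\zeta(\theta r)\bigr)$
for $r\ge r_*$. Treating this as the integrated form of a differential inequality and using that $g_1$ is non-decreasing, separation of variables yields
\[
	\int_{\zeta(r_*)}^{\zeta(R)}\frac{d\tau}{g_1(\tau)}
	\ \ge\ c_2\int_{r_*}^R\rho^{\,n-1}\,q_1(\rho)\,d\rho
\]
for all $R$ sufficiently large. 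By (T2.1.2) the right-hand side diverges as $R\to\infty$, whereas by (T2.1.1) the left-hand side is bounded above by $\int_{\zeta_*}^{\infty} d\tau/g_1(\tau)<\infty$, the initial value $\zeta(r_*)\ge\zeta_*$ being arranged by choosing $\zeta_*$ small. The resulting contradiction forces $u_2\equiv 0$, after which the first line of (1.1) together with (1.2) gives $u_1\equiv 0$ as well.

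The principal technical obstacle is the first step, namely the derivation of the sharp pointwise coupling carrying the exponent $1/(p_k-1)$ on $f_k$. This demands a delicate choice of test function in the weak form and careful bookkeeping to avoid losing the exponent; essentially, it is the Kilpel\"ainen--Mal\'y/Wolff-potential lower bound adapted to the general divergence structure (1.2). A secondary but non-trivial point is verifying the two smallness constraints of hypothesis (a) uniformly along the iteration, which is handled by combining (1.2) with the monotonicity of $F_k(x,\cdot)$ on $[0,\varepsilon]$.
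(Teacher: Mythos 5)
Your proposal diverges from the paper's argument at the central, load-bearing step, and the divergence leaves a gap that I do not see how to close. The paper does \emph{not} run the Osgood argument on a quantity of the form $\zeta(r)=r^{n-p_2}\bigl(\operatorname*{ess\,inf}_{B_r}u_2\bigr)^{p_2-1}$. It runs it on the non-decreasing energies $E_1(r)=\int_{\Omega_\varepsilon\cap B_r}F_2(x,u_1)\,dx$ and $E_2(r)=\int_{\Omega_\varepsilon\cap B_r}F_1(x,u_2)\,dx$, where $\Omega_\varepsilon=\{u_1<\varepsilon\}\cap\{u_2<\varepsilon\}$. Monotonicity of $E_k$ in $r$, combined with the annulus measure bound $\operatorname{mes}\bigl(\Omega_\varepsilon\cap B_{\theta r}\setminus B_r\bigr)\ge Cr^n$ coming from Lemma~\ref{L3.4}, yields a genuine \emph{increment} estimate $E_k(r_i)-E_k(r_{i-1})\ge C\,r^n q_k(r)\,g_k(\varkappa E_k(r_i))$ that telescopes; dividing by $g_k$ and summing over $i$ is then exactly what produces the comparison of $\int^\infty d\zeta/g_k(\zeta)$ with $\int^\infty r^{n-1}q_k(r)\,dr$. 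Your $\zeta(r)$ is not monotone (the infimum decays while $r^{n-p_2}$ grows, and nothing a priori decides which wins), and the discrete estimate you write down, $\zeta(r/\theta^2)\ge c_1\,r^n q_1(r)\,g_1(\zeta(\theta r))$, is a pointwise lower bound rather than a bound on a difference of successive values. There is no ``integrated differential inequality'' here, and separation of variables does not convert such a pointwise bound into the claimed inequality $\int_{\zeta(r_*)}^{\zeta(R)}d\tau/g_1(\tau)\ge c_2\int_{r_*}^R\rho^{n-1}q_1(\rho)\,d\rho$. This is the step where the plan fails.

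Two secondary points. First, the step you label the principal technical obstacle --- the pointwise coupling carrying the exponent $1/(p_k-1)$ on $f_k$ --- is obtained in the paper not by Wolff-potential or Kilpel\"ainen--Mal\'y machinery but by the much lighter combination of the generalized Kato inequality (Corollary~\ref{C3.1}), the weak Harnack inequality (Lemma~\ref{L3.2}), and the mean estimate of Lemma~\ref{L3.3}, packaged as Corollary~\ref{C3.2}; what is estimated from below is $\operatorname*{ess\,inf}_{\Omega_{k,\varepsilon}\cap B_{r_i}}u_k$ in terms of $E_{3-k}(r_i)$, not directly in terms of $f_k$ applied to an infimum of $u_{3-k}$. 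Second, because $F_k(x,\cdot)$ is monotone only on $[0,\varepsilon]$, the pointwise bound $F_k(x,u_j(x))\ge F_k(x,\operatorname*{ess\,inf} u_j)$ is available only on $\Omega_\varepsilon$; the smallness of $\operatorname*{ess\,inf}_{B_r}u_j$ does not imply $u_j(x)\le\varepsilon$ throughout $B_r$. The Kato truncation $u_{j,\varepsilon}=\chi_{\Omega_{j,\varepsilon}}u_j+(1-\chi_{\Omega_{j,\varepsilon}})\varepsilon$ is also what makes Lemma~\ref{L3.3} applicable without an a priori $L^\lambda_{loc}$ bound on $u_j$ for $\lambda>p_j-1$. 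Your sketch glosses over both of these points, which are precisely the ones the restriction to $\Omega_\varepsilon$ is there to handle.
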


The proof of Theorem~\ref{T2.1} is given in Section~\ref{proof}.
A partial case of~\eqref{1.1} is the systems
\begin{equation}
	\left\{
		\begin{aligned}
			&
			- \operatorname{div} A_1 (x, \nabla u_1)
			\ge
			a_1 (x) u_2^{\lambda_1}
			&
			\mbox{in } {\mathbb R}^n,
			\\
			&
			- \operatorname{div} A_2 (x, \nabla u_2)
			\ge
			a_2 (x) u_1^{\lambda_2}
			&
			\mbox{in } {\mathbb R}^n,
		\end{aligned}
	\right.
	\label{2.1}
\end{equation}
where $a_1$ and $a_2$ are non-negative measurable functions and $\lambda_1, \lambda_2 \ge 0$ are real numbers.

\begin{Corollary}\label{C2.1}
Suppose that $n > \operatorname{max} \{ p_1, p_2 \}$ and
\begin{equation}
	\frac{
		\lambda_1 \lambda_2 
	}{
		(p_1 - 1) (p_2 - 1)
	}
	>
	1.
	\label{C2.1.1}
\end{equation}
Also let there be $k \in \{ 1, 2 \}$ such that
\begin{equation}
	\int_1^\infty
	r^{\mu_k}
	\alpha_k (r)
	\,
	dr
	=
	\infty
	\label{C2.1.2},
\end{equation}
where
$$
	\mu_1
	=
	n - 1 
	+
	\frac{
		\lambda_2 p_1
	}{
		p_1 - 1
	}
	-
	\frac{
		\lambda_1 \lambda_2 (n - p_2)
	}{
		(p_1 - 1) (p_2 - 1)
	},
$$
$$
	\mu_2
	=
	n - 1 
	+
	\frac{
		\lambda_1 p_2
	}{
		p_2 - 1
	}
	-
	\frac{
		\lambda_2 \lambda_1 (n - p_1)
	}{
		(p_2 - 1) (p_1 - 1)
	},
$$
\begin{equation}
	\alpha_1 (r)
	=
	\left(
		\operatorname*{ess\,inf}\limits_{
			B_{\theta r} \setminus B_{r / \theta}
		}
		a_1
	\right)^{\lambda_2 / (p_1 - 1)}
	\operatorname*{ess\,inf}\limits_{
		B_{\theta r} \setminus B_{r / \theta}
	}
	a_2,
	\label{C2.1.5},
\end{equation}
and
\begin{equation}
	\alpha_2 (r)
	=
	\left(
		\operatorname*{ess\,inf}\limits_{
			B_{\theta r} \setminus B_{r / \theta}
		}
		a_2
	\right)^{\lambda_1 / (p_2 - 1)}
	\operatorname*{ess\,inf}\limits_{
		B_{\theta r} \setminus B_{r / \theta}
	}
	a_1
	\label{C2.1.6}
\end{equation}
for some real number $\theta > 1$.
Then any non-negative solution of~\eqref{2.1}, \eqref{1.2} is identically zero.
\end{Corollary}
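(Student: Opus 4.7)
My plan is to apply Theorem~\ref{T2.1} by exhibiting explicit $q_k$ and $g_k$ adapted to the power-law nonlinearities in~\eqref{2.1}. Writing $F_1 (x, u) = a_1 (x) u^{\lambda_1}$ and $F_2 (x, u) = a_2 (x) u^{\lambda_2}$, the definition~\eqref{1.4} factors as $f_k (r, \zeta) = b_k (r) \zeta^{\lambda_k}$ with $b_k (r) := \essinf_{B_{\theta r} \setminus B_{r / \theta}} a_k$, so the essential infima separate cleanly from the $\zeta$-dependence.

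The key step is to substitute this factorization into the argument of $f_2$ on the left-hand side of~\eqref{1.5} and collect exponents. The innermost value contributes $b_1 (r) (\zeta / r^{n - p_2})^{\lambda_1 / (p_2 - 1)}$; raising to the power $1 / (p_1 - 1)$ and multiplying by $\varepsilon_* r^{p_1 / (p_1 - 1)}$ gives $\varepsilon_* r^{p_1 / (p_1 - 1)} b_1 (r)^{1 / (p_1 - 1)} (\zeta / r^{n - p_2})^{\lambda_1 / ((p_1 - 1)(p_2 - 1))}$; finally the outer $f_2$ multiplies by $b_2 (r)$ and raises to the power $\lambda_2$. Gathering the resulting powers of $r$ and $\zeta$, the left-hand side of~\eqref{1.5} equals
\begin{equation*}
    \varepsilon_*^{\lambda_2} \, b_1 (r)^{\lambda_2 / (p_1 - 1)} b_2 (r) \, r^{\mu_1 - (n - 1)} \, \zeta^{\lambda_1 \lambda_2 / ((p_1 - 1)(p_2 - 1))},
\end{equation*}
with $\mu_1$ exactly as in the statement. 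Recognising $b_1^{\lambda_2 / (p_1 - 1)} b_2 = \alpha_1$ by~\eqref{C2.1.5}, this is an \emph{equality} of the product form $q_1 (r) g_1 (\zeta)$, so condition~$(a)$ holds with $q_1 (r) = \varepsilon_*^{\lambda_2} \alpha_1 (r) r^{\mu_1 - (n - 1)}$ and $g_1 (\zeta) = \zeta^{\lambda_1 \lambda_2 / ((p_1 - 1)(p_2 - 1))}$. The symmetric computation handles~$(b)$, producing $q_2 (r) = \varepsilon_*^{\lambda_1} \alpha_2 (r) r^{\mu_2 - (n - 1)}$ and $g_2 = g_1$. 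The degenerate case $b_j (r) = 0$ is resolved by the convention $f_k (r, 0) := 0$ (consistent with $F_k (x, 0) = 0$), which makes both sides of~\eqref{1.5} vanish.

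It remains to verify the two integrability hypotheses of Theorem~\ref{T2.1}. Condition~\eqref{C2.1.1} says precisely that the exponent of $\zeta$ in $g_k$ exceeds $1$, so $\int_{\zeta_*}^\infty d\zeta / g_k (\zeta) < \infty$, which is~\eqref{T2.1.1}. Moreover $r^{n - 1} q_k (r) = \varepsilon_*^{\lambda_{3 - k}} r^{\mu_k} \alpha_k (r)$, and so assumption~\eqref{C2.1.2} for the chosen index~$k$ yields~\eqref{T2.1.2}. Monotonicity of $g_k$ and measurability and non-negativity of $q_k$ are obvious. Theorem~\ref{T2.1} then completes the proof. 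The whole argument is essentially a routine exponent calculation; the only point requiring care is the bookkeeping that matches the powers coming out of the substitution with the stated $\mu_k$ and $\alpha_k$, which the computation above accomplishes.
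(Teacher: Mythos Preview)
Your proposal is correct and follows essentially the same approach as the paper: both compute the left-hand side of~\eqref{1.5} explicitly for the power-law system, recognise it as $\varepsilon_*^{\lambda_2}\alpha_1(r)\,r^{\mu_1-(n-1)}\,\zeta^{\lambda_1\lambda_2/((p_1-1)(p_2-1))}$, and apply Theorem~\ref{T2.1} with $g_1(\zeta)=\zeta^{\lambda_1\lambda_2/((p_1-1)(p_2-1))}$ and $q_1(r)=\varepsilon_*^{\lambda_2}\alpha_1(r)\,r^{\mu_1-(n-1)}$. Your write-up is somewhat more detailed (you spell out the symmetric case for condition~$(b)$ and the degenerate case $b_j(r)=0$), but the argument is the same.
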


\begin{proof}
We assume, without loss of generality, that~\eqref{C2.1.2} holds for $k = 1$.
If $k = 2$, then all reasoning is absolutely similar.
For systems~\eqref{2.1}, inequality~\eqref{1.5} takes the form
$$
	\varepsilon_*^{\lambda_2}
	\alpha_1 (r)
	r^{\lambda_2 p_1 / (p_1 - 1)}
	\left(
		\frac{
			\zeta
		}{
			r^{n - p_2}
		}
	\right)^{\lambda_1 \lambda_2 / ((p_1 - 1) (p_2 - 1))}
	\ge
	q_1 (r)
	g_1 (\zeta).
$$
Thus, using Theorem~\ref{T2.1} with
$$
	g_1 (\zeta) 
	=
	\zeta^{
		\lambda_1 \lambda_2 / ((p_1 - 1) (p_2 - 1))
	}
	\quad
	\mbox{and}
	\quad
	q_1 (r)
	=
	\varepsilon_*^{\lambda_2}
	r^{
		\lambda_2 p_1 / (p_1 - 1)
		-
		\lambda_1 \lambda_2 (n - p_2) / ((p_1 - 1) (p_2 - 1))
	}
	\alpha_1 (r),
$$
we complete the proof.
\end{proof}

\begin{Corollary}\label{C2.2}
Suppose that $n > \operatorname{max} \{ p_1, p_2 \}$ and
\begin{equation}
	\frac{
		\lambda_1 \lambda_2 
	}{
		(p_1 - 1) (p_2 - 1)
	}
	\le
	1.
	\label{C2.2.1}
\end{equation}
If at least one of the following two relations is valid:
\begin{equation}
	\operatorname*{lim\,sup}\limits_{
		r \to \infty
	}
	r^{
		n
		+
		\lambda_2 p_1 / (p_1 - 1)
		-
		\lambda (n - p_2)
	}
	\alpha_1 (r) 
	>
	0,
	\label{C2.2.2}
\end{equation}
\begin{equation}
	\operatorname*{lim\,sup}\limits_{
		r \to \infty
	}
	r^{
		n
		+
		\lambda_1 p_2 / (p_2 - 1)
		-
		\lambda (n - p_1)
	}
	\alpha_2 (r) 
	>
	0,
	\label{C2.2.3}
\end{equation}
where $\lambda > 1$ is a real number and the functions $\alpha_1$ and $\alpha_2$ are defined by~\eqref{C2.1.5} and~\eqref{C2.1.6} for some real number $\theta > 1$, then any non-negative solution of~\eqref{2.1}, \eqref{1.2} is identically zero.
\end{Corollary}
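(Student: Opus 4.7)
The plan is to apply Theorem~\ref{T2.1} following the same template as in Corollary~\ref{C2.1}, but adapted to the subcritical regime $B := \lambda_1 \lambda_2/((p_1 - 1)(p_2 - 1)) \le 1$, where the choice $g_1(\zeta) = \zeta^B$ used in Corollary~\ref{C2.1} is no longer integrable at infinity. Without loss of generality assume~\eqref{C2.2.2} holds (the argument under~\eqref{C2.2.3} is symmetric, with indices and conditions $(a)$, $(b)$ interchanged). Setting $E := n + \lambda_2 p_1/(p_1 - 1) - \lambda(n - p_2)$, the hypothesis supplies a constant $\delta > 0$ and a sequence $r_j \to \infty$ with $r_j^E \alpha_1(r_j) \ge \delta$.

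Mirroring Corollary~\ref{C2.1}, the left-hand side of~\eqref{1.5} for system~\eqref{2.1} equals $\varepsilon_*^{\lambda_2} \alpha_1(r) r^{\lambda_2 p_1/(p_1 - 1) - B(n - p_2)} \zeta^B$. Since $\lambda > 1 \ge B$, I take $g_1(\zeta) = \zeta^\lambda$, which satisfies~\eqref{T2.1.1}. The bound $\zeta \le \varepsilon^{p_2 - 1} r^{n - p_2}$ from the first constraint in $(a)$, combined with $B - \lambda < 0$, yields
$$
	\zeta^B
	\ge
	\bigl( \varepsilon^{p_2 - 1} r^{n - p_2} \bigr)^{B - \lambda} \zeta^\lambda,
$$
so that after collecting $r$-powers $(a)$ reduces to the choice
$$
	q_1 (r) = c(\varepsilon, \varepsilon_*) \, \alpha_1 (r) \, r^{\lambda_2 p_1/(p_1 - 1) - \lambda(n - p_2)},
$$
and hence $r^{n - 1} q_1 (r) = c(\varepsilon, \varepsilon_*) \, r^{E - 1} \alpha_1 (r)$. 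Condition $(b)$ is handled trivially by $q_2 \equiv 0$ (with any non-decreasing $g_2 > 0$), so the only thing left to check is~\eqref{T2.1.2} for $k = 1$, i.e.\ $\int_{r_*}^\infty r^{E - 1} \alpha_1 (r) \, dr = \infty$.

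The main obstacle is exactly this last divergence, since the hypothesis only supplies a positive \emph{limsup} at a discrete sequence $\{r_j\}$, and $\alpha_1$ could in principle vanish outside an arbitrarily sparse neighbourhood of each $r_j$. The remedy is to apply Theorem~\ref{T2.1} with a new parameter $\theta' \in (1, \theta)$ in place of $\theta$, so that the $f_k$ and hence the $\alpha_1$ entering the argument above are taken with essential infima over the narrower annulus $B_{\theta' r} \setminus B_{r/\theta'}$; denote this by $\alpha_1^{\theta'}$. The key geometric observation is that for every $r \in I_j := [\, r_j \theta'/\theta, \; r_j \theta/\theta' \,]$ we have $B_{\theta' r} \setminus B_{r/\theta'} \subset B_{\theta r_j} \setminus B_{r_j/\theta}$, so by monotonicity of $\essinf$ with respect to set inclusion
$$
	\alpha_1^{\theta'} (r)
	\ge
	\alpha_1 (r_j)
	\ge
	\delta / r_j^E
	\quad \mbox{for all } r \in I_j.
$$
Thinning $\{r_j\}$ to a subsequence with $r_{j+1}/r_j > (\theta/\theta')^2$ makes the intervals $I_j$ pairwise disjoint, and a direct computation gives $\int_{I_j} r^{E - 1} \, dr = \kappa(\theta, \theta', E) \, r_j^E$ with $\kappa(\theta, \theta', E) > 0$ (with the obvious $\log$ modification when $E = 0$), whence $\int_{I_j} r^{E - 1} \alpha_1^{\theta'} (r) \, dr \ge \delta \kappa$ independent of $j$. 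Summing over $j$ delivers the required divergence, and Theorem~\ref{T2.1} concludes the proof.
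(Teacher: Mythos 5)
Your proof is correct and follows essentially the same route as the paper: pass to a narrower annulus parameter $\tilde\theta \in (1,\theta)$, use the annulus inclusion $B_{\tilde\theta r}\setminus B_{r/\tilde\theta}\subset B_{\theta r_i}\setminus B_{r_i/\theta}$ for $r$ near $r_i$ to propagate the limsup bound to $\tilde\alpha_1$ on a full interval, replace $\zeta^{B}$ by $\zeta^{\lambda}$ using $B\le 1<\lambda$ and the constraint $\zeta/r^{n-p_2}\le 1$, and then invoke Theorem~\ref{T2.1}. You supply two details the paper leaves implicit (thinning the sequence so the intervals are disjoint, and that $q_2\equiv 0$ disposes of condition $(b)$), and you correct a sign slip in the paper where the interval endpoints are written in reverse order.
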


\begin{proof}
We assume that~\eqref{C2.2.2} is fulfilled. In the case where~\eqref{C2.2.3} holds, our reasoning is absolutely similar. Let $r_i$, $i = 1,2,\ldots$, be a sequence of real numbers such that $r_i \to \infty$ as $i \to \infty$ and, moreover,
$$
	r_i^{
		n
		+
		\lambda_2 p_1 / (p_1 - 1)
		-
		\lambda (n - p_2)
	}
	\alpha_1 (r_i) 
	>
	\gamma
$$
with some constant $\gamma > 0$ for all $i = 1,2, \ldots$.
Putting
$$
	\tilde \alpha_1 (r)
	=
	\left(
		\operatorname*{ess\,inf}\limits_{
			B_{\tilde \theta r} \setminus B_{r / \tilde \theta}
		}
		a_1
	\right)^{\lambda_2 / (p_1 - 1)}
	\operatorname*{ess\,inf}\limits_{
		B_{\tilde \theta r} \setminus B_{r / \tilde \theta}
	}
	a_2,
$$
where $1 < \tilde \theta < \theta$ is some real number, we obviously obtain
$$
	r_i^{
		n
		+
		\lambda_2 p_1 / (p_1 - 1)
		-
		\lambda (n - p_2)
	}
	\inf_{
		(r_i \theta / \tilde \theta, r_i \tilde \theta / \theta)
	}
	\tilde \alpha_1
	>
	\gamma
$$
for all $i = 1,2, \ldots$. Hence,
$$
	\int_{r_0}^\infty
	r^{n-1}
	q_1 (r)
	\,
	dr
	=
	\infty,
$$
where
$$
	q_1 (r)
	=
	r^{
		\lambda_2 p_1 / (p_1 - 1)
		-
		\lambda (n - p_2)
	}
	\tilde \alpha_1 (r).
$$
Since,
$$
	\left(
		\frac{
			\zeta
		}{
			r^{n - p_2}
		}
	\right)^{\lambda_1 \lambda_2 / ((p_1 - 1) (p_2 - 1))}
	\ge
	\left(
		\frac{
			\zeta
		}{
			r^{n - p_2}
		}
	\right)^\lambda
$$
for all real numbers $r > 0$ and $\zeta > 0$ satisfying the condition
\begin{equation}
	\frac{
		\zeta
	}{
		r^{n - p_2}
	}
	\le 
	1,
	\label{PC2.2.1}
\end{equation}
we also have
\begin{equation}
	\tilde \alpha_1 (r)
	r^{\lambda_2 p_1 / (p_1 - 1)}
	\left(
		\frac{
			\zeta
		}{
			r^{n - p_2}
		}
	\right)^{\lambda_1 \lambda_2 / ((p_1 - 1) (p_2 - 1))}
	\ge
	q_1 (r)
	\zeta^\lambda
	\label{PC2.2.2}
\end{equation}
for all real numbers $r > 0$ and $\zeta > 0$ satisfying~\eqref{PC2.2.1}.
It can bee seen that for systems of the form~\eqref{2.1} inequality~\eqref{PC2.2.2} is equivalent to~\eqref{1.5} with
$
	g_1 (\zeta) 
	= 
	\varepsilon_*^{\lambda_2}
	\zeta^\lambda
$
and $\theta$ replaced by $\tilde \theta$.
Thus, to complete the proof, it remains to apply Theorem~\ref{T2.1}.
\end{proof}

\begin{Example}\label{E2.1}
Consider the system
\begin{equation}
	\left\{
		\begin{aligned}
			&
			- \operatorname{div} A_1 (x, \nabla u_1)
			\ge
			u_2^{\lambda_1}
			&
			\mbox{in } {\mathbb R}^n,
			\\
			&
			- \operatorname{div} A_2 (x, \nabla u_2)
			\ge
			u_1^{\lambda_2}
			&
			\mbox{in } {\mathbb R}^n,
		\end{aligned}
	\right.
	\label{E2.1.1}
\end{equation}
where $n > \operatorname{max} \{ p_1, p_2 \}$ and $\lambda_1, \lambda_2 \ge 0$ are real numbers.
By Corollaries~\ref{C2.1} and~\ref{C2.2}, if
\begin{equation}
	\frac{
		n (p_1 - 1) (p_2 - 1)
	}{
		\lambda_1 \lambda_2
	}
	\ge
	\operatorname{min}
	\left\{
		n - p_1 - (p_1 - 1) \frac{p_2}{\lambda_2},
		n - p_2 - (p_2 - 1) \frac{p_1}{\lambda_1}
	\right\},
	\label{E2.1.2}
\end{equation}
then any non-negative solution of~\eqref{E2.1.1}, \eqref{1.2} is identically zero.
Indeed, in the case where~\eqref{C2.1.1} is valid, this follows immediately from Corollary~\ref{C2.1}.
Let~\eqref{C2.2.1} be fulfilled. We obviously have
$$
	n
	+
	\lambda_2 p_1 / (p_1 - 1)
	>
	n - p_2;
$$
therefore, there exists a real numbers $\lambda > 1$ such that
$$
	n
	+
	\lambda_2 p_1 / (p_1 - 1)
	-
	\lambda (n - p_2)
	>
	0.
$$
Thus, to show the triviality of any non-negative solution of~\eqref{E2.1.1}, \eqref{1.2}, it is sufficient to use Corollary~\ref{C2.2}.

Note that~\eqref{E2.1.2} coincides with the analogous condition obtained in~\cite[Theorem~4.6]{AM}.
\end{Example}

\begin{Example}\label{E2.2}
Let us examine the case of critical exponents $\lambda_1$ and $\lambda_2$ in~\eqref{E2.1.2}.
Namely, consider the system
\begin{equation}
	\left\{
		\begin{aligned}
			&
			- \operatorname{div} A_1 (x, \nabla u_1)
			\ge
			u_2^{\lambda_1}
			\log^{\sigma_1}
			\left(
				e 
				+
				\frac{1}{u_2}
			\right)
			&
			\mbox{in } {\mathbb R}^n,
			\\
			&
			- \operatorname{div} A_2 (x, \nabla u_2)
			\ge
			u_1^{\lambda_2}
			\log^{\sigma_2}
			\left(
				e 
				+
				\frac{1}{u_1}
			\right)
			&
			\mbox{in } {\mathbb R}^n,
		\end{aligned}
	\right.
	\label{E2.2.1}
\end{equation}
where $n > \operatorname{max} \{ p_1, p_2 \}$, the real numbers $\lambda_1, \lambda_2 > 0$ satisfy~\eqref{C2.1.1}, $\sigma_1, \sigma_2 \in {\mathbb R}$ and, moreover,
$$
	\frac{
		n 
		(p_1 - 1) (p_2 - 1)
	}{
		\lambda_1 \lambda_2
	}
	=
	n - p_2 - (p_2 - 1) \frac{p_1}{\lambda_1}
$$
and
$$
	\frac{
		n 
		(p_1 - 1) (p_2 - 1)
	}{
		\lambda_1 \lambda_2
	}
	<
	n - p_1 - (p_1 - 1) \frac{p_2}{\lambda_2}.
$$
If $u_1 = 0$ or $u_2 = 0$, then it is assumed that the right-hand side of the corresponding inequality in~\eqref{E2.2.1} is equal to zero.
It can be verified that for any real numbers $\varepsilon_*, \zeta_* > 0$ there is a real number $r_* > 0$ such that condition~(a) is satisfied with some positive continuous functions
$$
	g_1 (\zeta) 
	\asymp
	\frac{
		\zeta^{
			\lambda_1 \lambda_2 / ((p_1 - 1) (p_2 - 1))
		}
	}{
		\log^{
			|\sigma_2| +  \lambda_2 |\sigma_1| / (p_1 - 1)
		} 
		\zeta
	}
	\quad
	\mbox{and}
	\quad
	q_1 (r)
	\asymp
	\frac{
		1
	}{
		r^n
	}
	\log^{
		\sigma_2 + \lambda_2 \sigma_1 / (p_1 - 1)
	} 
	r.
$$
Thus, in accordance with Theorem~\ref{T2.1} if
$$
	\sigma_2 
	+ 
	\frac{
		\lambda_2 \sigma_1 
	}{
		p_1 - 1
	} 
	\ge 
	- 1,
$$
then any non-negative solution of~\eqref{E2.2.1}, \eqref{1.2} is identically zero.
\end{Example}

\section{Proof of Theorem~\ref{T2.1}}\label{proof}
Let us agree to denote by $C$, $\sigma$, and $\varkappa$ various positive constants that can depend only on $n$, $p_1$, $p_2$, $C_1$, $C_2$, $\varepsilon$, $\theta$, and $\lambda$.
In so doing, by $\chi_\omega$ we mean the characteristic function of a set $\omega \subset {\mathbb R}^n$, i.e.
$$
	\chi_\omega (x)
	=
	\left\{
		\begin{array}{ll}
			1,
			&
			x
			\in
			\omega,
			\\
			0,
			&
			x
			\not\in
			\omega.
		\end{array}
	\right.
$$

Assume that $A$ is a Caratheodory function satisfying the uniform ellipticity condition
\begin{equation}
	C_1
	|\xi|^p
	\le
	\zeta
	A (x, \xi),
	\quad
	|A (x, \xi)|
	\le
	C_2
	|\xi|^p,
	\quad
	p > 1,
	\label{3.0}
\end{equation}
for almost all
$x \in {\mathbb R}^n$
and for all
$\xi \in {\mathbb R}^n$.
We say that $v \in W_{p, loc}^1 (\omega)$ is a solution of the inequality
\begin{equation}
	\operatorname{div} A (x, \nabla v)
	\ge
	a (x)
	\quad
	\mbox{in } \omega,
	\label{3.1}
\end{equation}
where $\omega \subset {\mathbb R}^n$ is a non-empty open set and $a \in L_{1, loc} (\omega)$, if
$$
	-
	\int_\omega
	A (x, \nabla v) \nabla \varphi
	\,
	dx
	\ge
	\int_\omega
	a (x)
	\varphi
	\,
	dx
$$
for any non-negative function $\varphi \in C_0^\infty (\omega)$.
A solution of the inequality
\begin{equation}
	- \operatorname{div} A (x, \nabla u)
	\ge
	a (x)
	\quad
	\mbox{in } \omega,
	\label{3.2}
\end{equation}
where $\omega \subset {\mathbb R}^n$ is a non-empty open set and $a \in L_{1, loc} (\omega)$, is defined in a similar way.

\begin{Lemma}[Generalized Kato's inequality]\label{L3.1}
Let $v \in W_{p, loc}^1 (\omega)$ be a solution of~\eqref{3.1}.
Then $v_+ = \chi_{\omega_+} v$ satisfies the inequality
$$
	\operatorname{div} A (x, \nabla v_+)
	\ge
	\chi_{\omega_+} (x)
	a (x)
	\quad
	\mbox{in } \omega,
$$
where $\omega_+ = \{ x \in \omega : v (x) > 0 \}$. 
\end{Lemma}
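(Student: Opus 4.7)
The plan is to follow the standard ``regularize-and-pass-to-the-limit'' approach for Kato-type inequalities, with the twist that the natural quadratic term controlled by the ellipticity here becomes the quantity $\psi_\varepsilon'(v)\,\nabla v \cdot A(x,\nabla v)\,\varphi$, which is automatically non-negative by the left inequality in~\eqref{3.0}. Concretely, fix a non-negative test function $\varphi \in C_0^\infty(\omega)$ and a family $\psi_\varepsilon \in C^\infty(\mathbb{R})$ of non-decreasing cut-offs with $\psi_\varepsilon(t)=0$ for $t \le 0$, $\psi_\varepsilon(t)=1$ for $t \ge \varepsilon$, and $0 \le \psi_\varepsilon' \le 2/\varepsilon$. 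Since $v \in W^{1,p}_{loc}(\omega)$ and $\psi_\varepsilon$ is bounded and Lipschitz, the product $\psi_\varepsilon(v)\varphi$ lies in $W^{1,p}_0(\omega)$ with compact support in $\omega$, is non-negative, and is therefore admissible as a test function in~\eqref{3.1} (after a routine mollification argument that I would sketch but not perform).

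Substituting $\psi_\varepsilon(v)\varphi$ and expanding $\nabla(\psi_\varepsilon(v)\varphi) = \psi_\varepsilon'(v)\varphi\,\nabla v + \psi_\varepsilon(v)\nabla\varphi$, I get
$$
	-\int_\omega \psi_\varepsilon'(v)\,\varphi\, A(x,\nabla v)\cdot\nabla v \, dx
	-\int_\omega \psi_\varepsilon(v)\, A(x,\nabla v)\cdot\nabla \varphi \, dx
	\ge
	\int_\omega a(x)\,\psi_\varepsilon(v)\,\varphi \, dx.
$$
The first integrand is non-negative by~\eqref{3.0}, $\psi_\varepsilon' \ge 0$, and $\varphi \ge 0$, so dropping it only strengthens the inequality:
$$
	-\int_\omega \psi_\varepsilon(v)\, A(x,\nabla v)\cdot\nabla \varphi \, dx
	\ge
	\int_\omega a(x)\,\psi_\varepsilon(v)\,\varphi \, dx.
$$

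Now I pass to the limit $\varepsilon \to 0^+$. Pointwise, $\psi_\varepsilon(v)(x) \to \chi_{\omega_+}(x)$ for a.e.\ $x \in \omega$, and $0 \le \psi_\varepsilon(v) \le 1$, so dominated convergence (using $|A(x,\nabla v)\cdot\nabla\varphi| \le C_2|\nabla v|^{p-1}|\nabla\varphi| \in L^1(\operatorname{supp}\varphi)$ on the left and $|a\varphi| \in L^1(\operatorname{supp}\varphi)$ on the right) yields
$$
	-\int_\omega \chi_{\omega_+}(x)\, A(x,\nabla v)\cdot\nabla \varphi \, dx
	\ge
	\int_\omega \chi_{\omega_+}(x)\,a(x)\,\varphi \, dx.
$$
To rewrite the left-hand side in terms of $v_+$, I invoke the Stampacchia-type identity $\nabla v_+ = \chi_{\omega_+}\nabla v$ a.e.\ in $\omega$, which holds since $\nabla v = 0$ a.e.\ on $\{v = 0\}$ and $v_+ = \chi_{\omega_+}v$. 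Together with $A(x,0) = 0$ (a consequence of the growth bound in~\eqref{3.0}), this gives $A(x,\nabla v_+) = \chi_{\omega_+}\,A(x,\nabla v)$ a.e., and the displayed inequality becomes exactly the distributional formulation of $\operatorname{div} A(x,\nabla v_+) \ge \chi_{\omega_+}a$ tested against $\varphi$.

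The main obstacle I anticipate is the justification that $\psi_\varepsilon(v)\varphi$ is a legitimate test function: the definition in~\eqref{3.1} requires $C_0^\infty$ test functions, so one must approximate $\psi_\varepsilon(v)\varphi$ in $W^{1,p}$-norm by non-negative smooth compactly supported functions and verify that both sides of the inequality pass to the limit, using $A(x,\nabla v) \in L^{p/(p-1)}_{loc}(\omega)$ and $a\varphi \in L^1(\omega)$. A secondary technical point is the identity $\nabla v_+ = \chi_{\omega_+}\nabla v$, which is standard but should be cited explicitly from the truncation theory of Sobolev functions.
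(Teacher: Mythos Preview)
Your proof is correct and follows the standard regularization approach to Kato-type inequalities for quasilinear operators. Note, however, that the paper does not actually supply its own proof of this lemma: immediately after the statement it writes ``The proof of Lemma~\ref{L3.1} is given in~\cite[Lemma~4.2]{meJMAA_2007}'' and proceeds to its corollary. Your argument---test with $\psi_\varepsilon(v)\varphi$, drop the sign-definite $\psi_\varepsilon'$-term via the coercivity in~\eqref{3.0}, pass to the limit by dominated convergence, and identify $\chi_{\omega_+}A(x,\nabla v)$ with $A(x,\nabla v_+)$ using $A(x,0)=0$ and the Stampacchia truncation identity---is exactly the expected proof and is almost certainly what the cited reference contains. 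The two technical points you flag (extending the class of test functions from $C_0^\infty$ to non-negative compactly supported $W^{1,p}$ functions by density, and the chain rule $\nabla v_+=\chi_{\omega_+}\nabla v$) are genuine but entirely routine, and your treatment of them is appropriate.
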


The proof of Lemma~\ref{L3.1} is given in~\cite[Lemma~4.2]{meJMAA_2007}.
Putting $v = \varepsilon - u$ in Lemma~\ref{L3.1}, we obtain the following statement.

\begin{Corollary}\label{C3.1}
Let $u$ be a solution of~\eqref{3.2}. Then 
$
	u_\varepsilon 
	= 
	\chi_{\omega_\varepsilon} 
	u 
	+ 
	(1 - \chi_{\omega_\varepsilon})
	\varepsilon
$
satisfies the inequality
$$
	- \operatorname{div} A (x, \nabla u_\varepsilon)
	\ge
	\chi_{\omega_\varepsilon} (x)
	a (x)
	\quad
	\mbox{in } {\mathbb R}^n,
$$
where $\omega_\varepsilon = \{ x \in {\mathbb R}^n : u (x) < \varepsilon \}$.
\end{Corollary}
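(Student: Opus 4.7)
The plan is to reduce the statement directly to Lemma~\ref{L3.1} by setting $v = \varepsilon - u$. Since reversing the sign in front of $\operatorname{div}$ is equivalent to reversing the sign of the vector field inside, I would first introduce the auxiliary Caratheodory function $\tilde A(x,\xi) := -A(x,-\xi)$ and verify that it satisfies the same structure conditions~\eqref{3.0} with the same constants $C_1, C_2$ and the same exponent $p$: indeed, $\xi \tilde A(x,\xi) = (-\xi) A(x,-\xi) \ge C_1 |\xi|^p$ and $|\tilde A(x,\xi)| = |A(x,-\xi)| \le C_2 |\xi|^{p-1}$ for almost all $x \in \mathbb{R}^n$ and all $\xi \in \mathbb{R}^n$.

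Next I would note that $v \in W_{p,loc}^1(\mathbb{R}^n)$ with $\nabla v = -\nabla u$, so $\tilde A(x,\nabla v) = -A(x,\nabla u)$. For every non-negative $\varphi \in C_0^\infty(\mathbb{R}^n)$ one then has
\[
-\int_{\mathbb{R}^n} \tilde A(x,\nabla v)\nabla\varphi\,dx = \int_{\mathbb{R}^n} A(x,\nabla u)\nabla\varphi\,dx \ge \int_{\mathbb{R}^n} a(x)\varphi\,dx,
\]
so $v$ is a solution of an inequality of the form~\eqref{3.1} in $\mathbb{R}^n$ with $A$ replaced by $\tilde A$. I would then apply Lemma~\ref{L3.1} to this $v$ and conclude that $v_+ = \chi_{\{v>0\}} v$ satisfies $\operatorname{div} \tilde A(x,\nabla v_+) \ge \chi_{\{v>0\}}(x) a(x)$ in $\mathbb{R}^n$.

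It only remains to translate back. Observe that $\{v>0\} = \{\varepsilon - u > 0\} = \{u < \varepsilon\} = \omega_\varepsilon$, and moreover
\[
\varepsilon - v_+ = \chi_{\omega_\varepsilon} u + (1 - \chi_{\omega_\varepsilon})\varepsilon = u_\varepsilon,
\]
since on $\omega_\varepsilon$ one has $v_+ = \varepsilon - u$ while on its complement $v_+ = 0 = \varepsilon - \varepsilon$. Because translation by the constant $\varepsilon$ preserves the gradient, $\nabla u_\varepsilon = -\nabla v_+$, and therefore $-\operatorname{div} A(x,\nabla u_\varepsilon) = \operatorname{div} \tilde A(x,\nabla v_+) \ge \chi_{\omega_\varepsilon}(x) a(x)$, which is the required conclusion. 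The entire argument is a change of variable; the only point requiring any care is checking that the reflected nonlinearity $\tilde A$ falls within the scope of Lemma~\ref{L3.1}, and since this is immediate from the algebraic identities above I do not expect any genuine obstacle.
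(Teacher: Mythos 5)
Your proof is correct and takes essentially the same approach as the paper: the paper's entire proof is the single line ``Putting $v = \varepsilon - u$ in Lemma~\ref{L3.1}, we obtain the following statement,'' and you have simply spelled out the details of that substitution. The one point you make explicit that the paper leaves tacit --- namely that the change of variable also requires replacing $A$ by the reflected nonlinearity $\tilde A(x,\xi) = -A(x,-\xi)$, which satisfies the same structure conditions~\eqref{3.0}, so that Lemma~\ref{L3.1} genuinely applies --- is accurate and worth noting.
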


We also need the three lemmas below.

\begin{Lemma}[Weak Harnack inequality]\label{L3.2}
Let $u \ge 0$ be a solution of the inequality
\begin{equation}
	- \operatorname{div} A (x, \nabla u) \ge 0
	\quad
	\mbox{in } {\mathbb R}^n,
	\label{L3.2.1}
\end{equation}
where  $n > p$. Then 
$$
	\left(
		\frac{
			1
		}{
			\mes B_{2 r}
		}
		\int_{
			B_{2 r}
		}
		u^\lambda
		\,
		dx
	\right)^{1 / \lambda}
	\le
	C
	\operatorname*{ess\,inf}\limits_{
		B_r
	}
	u
$$
for all $\lambda \in (0, n (p - 1) / (n - p))$ and $r \in (0, \infty)$.
\end{Lemma}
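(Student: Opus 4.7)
The statement is the classical weak Harnack inequality for non-negative supersolutions of quasilinear operators of $p$-Laplacian type, due in this generality to Serrin and Trudinger. The strategy I would follow combines Moser iteration on negative powers of $u$, a logarithmic (BMO) estimate, and the Bombieri--De Giorgi--Giusti absorbing lemma (equivalently, the John--Nirenberg inequality), closed out by a forward Moser iteration on positive powers.

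First, I would test \eqref{L3.2.1} with $\varphi = \eta^p (u+\delta)^{-\beta}$, where $\eta \in C_0^\infty (B_{2r})$ is a non-negative cutoff, $\delta > 0$ is a regularisation, and $\beta > 0$ with $\beta \neq p-1$. The structural conditions \eqref{3.0} together with Young's inequality give
$$
\int \eta^p |\nabla u|^p (u+\delta)^{-\beta-1}\,dx
\le
C(\beta) \int |\nabla \eta|^p (u+\delta)^{p-1-\beta}\,dx.
$$
Setting $w = (u+\delta)^{(p-1-\beta)/p}$ and invoking the Sobolev embedding yields a reverse H\"older inequality for suitable powers of $u+\delta$; iterating over a sequence of nested balls between $B_r$ and $B_{2r}$ and sending $\delta \to 0$ produces, for every $q > 0$,
$$
\esssup_{B_r} u^{-1}
\le
C(q) \left(\frac{1}{\mes B_{2r}} \int_{B_{2r}} u^{-q}\,dx\right)^{1/q},
$$
which is equivalent to controlling $(\essinf_{B_r} u)^{-1}$ by negative-exponent means of $u$ on $B_{2r}$.

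Next, the borderline case $\beta = p-1$ in the same test-function scheme yields a Caccioppoli estimate for $\log u$ of the form $\int \eta^p |\nabla \log u|^p\,dx \le C \int |\nabla \eta|^p\,dx$; via the Poincar\'e inequality this shows that $\log u$ has bounded mean oscillation with a constant depending only on $n$, $p$, $C_1$, $C_2$. The John--Nirenberg inequality (or the equivalent Bombieri--Giusti absorbing lemma) then supplies a universal $q_0 > 0$ with
$$
\left(\frac{1}{\mes B_{2r}} \int_{B_{2r}} u^{q_0}\,dx\right)^{1/q_0}
\left(\frac{1}{\mes B_{2r}} \int_{B_{2r}} u^{-q_0}\,dx\right)^{1/q_0}
\le C,
$$
which combined with the negative-exponent bound above yields the weak Harnack inequality at the single exponent $\lambda = q_0$.

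To extend the admissible range of $\lambda$ up to $n(p-1)/(n-p)$, I would run a forward Moser iteration using truncated test functions $\varphi = \eta^p \min(u,M)^\beta$ with $\beta > 0$, passing $M \to \infty$ at the end; each Sobolev step multiplies the exponent by the factor $n/(n-p)$, and the natural obstruction emerges exactly at $n(p-1)/(n-p)$. The hard part will be the logarithmic / John--Nirenberg step, i.e.\ establishing BMO bounds for $\log u$ with constants independent of the ball and converting them into the mixed-norm inequality above, since the standard linear argument requires a careful quasilinear adaptation. The negative-power and positive-power Moser iterations themselves are routine once the $L^{q_0}$ bridge is in place, but the regularisations $\delta \to 0$ and $M \to \infty$ must be monitored to keep all constants uniform in the ball.
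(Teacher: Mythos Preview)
The paper does not give its own proof of this lemma; it simply cites Serrin, Trudinger, and Trudinger--Wang. Your outline is precisely the classical Moser-iteration argument found in those references, so in effect you are supplying what the paper delegates to the literature.

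One step as written would not work, however. In the final paragraph you propose to lift the exponent from the small $q_0$ supplied by John--Nirenberg up to $n(p-1)/(n-p)$ by a forward Moser iteration with test functions $\varphi = \eta^p \min(u,M)^\beta$, $\beta > 0$. For a supersolution of \eqref{L3.2.1} this produces the wrong sign: the term $\beta\int \eta^p (u+\delta)^{\beta-1}A(x,\nabla u)\cdot\nabla u$ sits with a positive coefficient on the same side as the cutoff term, and no Caccioppoli inequality follows. The correct lift still uses \emph{negative}-power test functions: with $\gamma \in (-(p-1),0)$ in $\varphi = \eta^p(u+\delta)^{\gamma}$ one obtains a Caccioppoli bound for $w = (u+\delta)^{(\gamma+p-1)/p}$ with $\gamma+p-1 \in (0,p-1)$, and a single Sobolev step then raises this positive exponent by the factor $n/(n-p)$, landing below $n(p-1)/(n-p)$, which is exactly the claimed ceiling. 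Thus both halves of the iteration run on negative exponents in the test function; the truncation $\min(u,M)$ and positive $\beta$ belong to the subsolution (local boundedness) theory, not here.
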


\begin{Lemma}\label{L3.3}
Let $u \ge 0$ be a solution of the inequality
\begin{equation}
	- \operatorname{div} A (x, \nabla u)
	\ge
	a (x)
	\quad
	\mbox{in } {\mathbb R}^n,
	\label{L3.3.1}
\end{equation}
where $a \in L_{1, loc} ({\mathbb R}^n)$ is a non-negative function.
If $u^\lambda \in L_{1, loc} ({\mathbb R}^n)$ for some $\lambda \in (p - 1, \infty)$, then
$$
	\frac{1}{\mes B_r}
	\int_{B_r}
	a (x)
	\,
	dx
	\le
	C
	r^{- p}
	\left(
		\frac{
			1
		}{
			\mes B_{2 r} \setminus B_r
		}
		\int_{
			B_{2 r} \setminus B_r
		}
		u^\lambda
		\,
		dx
	\right)^{(p - 1) / \lambda}
$$
for all $r \in (0, \infty)$.
\end{Lemma}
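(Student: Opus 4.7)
The plan is to test the weak inequality $-\operatorname{div} A(x, \nabla u) \ge a$ against two non-negative test functions and combine the resulting bounds via H\"older's inequality. Fix a cutoff $\eta \in C_0^\infty(\mathbb R^n)$ supported in $B_{2r}$, with $0 \le \eta \le 1$, $\eta \equiv 1$ on $B_r$, and $|\nabla \eta| \le C/r$; set $u_\delta = u + \delta$ for a regularization parameter $\delta > 0$, and put $t = \lambda / (p-1)$, so that the assumption $\lambda > p - 1$ is equivalent to $t > 1$.

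My first test function is $\varphi = \eta^p$. Using $\eta \equiv 1$ on $B_r$, $a \ge 0$, and the growth bound $|A(x, \xi)| \le C_2 |\xi|^{p-1}$, this gives
$$
\int_{B_r} a \, dx \le C \int \eta^{p-1} |\nabla u|^{p-1} |\nabla \eta| \, dx.
$$
Splitting the integrand as $[\eta^{p-1} u_\delta^{-t(p-1)/p} |\nabla u|^{p-1}] \cdot [u_\delta^{t(p-1)/p} |\nabla \eta|]$ and applying H\"older with exponents $p/(p-1)$ and $p$ yields
$$
\int \eta^{p-1} |\nabla u|^{p-1} |\nabla \eta| \, dx \le \left( \int \eta^p u_\delta^{-t} |\nabla u|^p \, dx \right)^{(p-1)/p} \left( \int u_\delta^\lambda |\nabla \eta|^p \, dx \right)^{1/p},
$$
where I have used $t(p-1) = \lambda$.

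To control the first factor on the right, I next test the inequality with the Caccioppoli cutoff $\varphi = \eta^p u_\delta^{-(t-1)}$, which is non-negative and, since $u_\delta \ge \delta > 0$, belongs to a suitable Sobolev space with compact support (admissibility follows by a standard density argument). Expanding $\nabla \varphi$, using the ellipticity $\xi \cdot A(x, \xi) \ge C_1 |\xi|^p$ and the growth bound $|A| \le C_2 |\nabla u|^{p-1}$, and absorbing the resulting boundary term via Young's inequality (for which $t > 1$ is what makes absorption possible), I expect the Caccioppoli-type bound
$$
\int \eta^p u_\delta^{-t} |\nabla u|^p \, dx \le C \int u_\delta^{p-t} |\nabla \eta|^p \, dx.
$$
Substituting this back and applying H\"older once more on the measure $|\nabla \eta|^p \, dx$ concentrated in $B_{2r} \setminus B_r$, so as to rewrite $\int u_\delta^{p-t} |\nabla \eta|^p$ as a power of $\int u_\delta^\lambda |\nabla \eta|^p$ times a power of $\int |\nabla \eta|^p$, the exponents should simplify via the identity $(p-t)(p-1) + \lambda = p(p-1)$, leaving $\int u_\delta^\lambda |\nabla \eta|^p$ raised to the power $(p-1)/\lambda$ and $\int |\nabla \eta|^p$ raised to the power $(\lambda-p+1)/\lambda$. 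Inserting $|\nabla \eta|^p \le C r^{-p}$ and $\mes(B_{2r} \setminus B_r) \le C r^n$, and finally sending $\delta \to 0^+$ by dominated convergence (legitimate because $u^\lambda \in L_{1,loc}(\mathbb R^n)$), should then produce the lemma for $\lambda \in (p-1, p(p-1)]$.

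This last H\"older step requires $0 \le p - t \le \lambda$, i.e., $\lambda \le p(p-1)$; the remaining range $\lambda > p(p-1)$ is handled by first invoking the estimate just established for some fixed $\tilde\lambda \in (p-1, p(p-1)]$ and then bounding $\int_{B_{2r}\setminus B_r} u^{\tilde\lambda}$ by $(\int u^\lambda)^{\tilde\lambda/\lambda} (\mes(B_{2r}\setminus B_r))^{1 - \tilde\lambda/\lambda}$, which inserts exactly the volume factor needed to replace the exponent $(p-1)/\tilde\lambda$ by $(p-1)/\lambda$. The main anticipated obstacle is the exponent bookkeeping through the two H\"older applications combined with the Caccioppoli absorption: one must keep track of the powers of $\eta$, $u_\delta$, and $|\nabla \eta|$ so that the final $r$-scaling collapses exactly to $r^{n - p - n(p-1)/\lambda}$, which after division by $\mes B_r \sim r^n$ matches the form in the statement.
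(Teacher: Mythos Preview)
The paper does not give its own proof of Lemma~3.3; it simply cites D'Ambrosio--Mitidieri~[1] (reference~\texttt{AM}). So there is no ``paper's proof'' to compare against, only your argument to assess.

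Your argument is correct and is in fact the standard route (and essentially the one in~[1]): test with $\eta^p$ to expose $\int_{B_r} a$, test with $\eta^p u_\delta^{-(t-1)}$ to get the Caccioppoli bound $\int \eta^p u_\delta^{-t}|\nabla u|^p \le C\int u_\delta^{p-t}|\nabla\eta|^p$, and combine via H\"older. The exponent bookkeeping is right: the identity $(p-t)(p-1)+\lambda=p(p-1)$ indeed collapses the powers on $\int u_\delta^\lambda|\nabla\eta|^p$ to $(p-1)/\lambda$, and the residual power $(\lambda-p+1)/\lambda$ on $\int|\nabla\eta|^p$ together with $|\nabla\eta|\le C/r$ produces exactly $r^{n-p-n(p-1)/\lambda}$, matching the statement after dividing by $\mes B_r$. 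The split into $\lambda\in(p-1,p(p-1)]$ followed by a H\"older step for $\lambda>p(p-1)$ is the clean way to avoid the negative exponent $p-t<0$; your final H\"older computation for that range is also correct, as $\bigl(|B_{2r}\setminus B_r|^{-1}\int u^{\tilde\lambda}\bigr)^{(p-1)/\tilde\lambda}\le\bigl(|B_{2r}\setminus B_r|^{-1}\int u^{\lambda}\bigr)^{(p-1)/\lambda}$ is just Jensen.

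Two minor points worth making explicit when you write it up: (i) the admissibility of $\varphi=\eta^p u_\delta^{-(t-1)}$ uses that $u_\delta^{-(t-1)}$ and $u_\delta^{-t}$ are bounded (by $\delta^{-(t-1)}$, $\delta^{-t}$), so $\varphi\in W^{1,p}$ with compact support, and the weak inequality extends from $C_0^\infty$ to such $\varphi$ because $A(x,\nabla u)\in L^{p'}_{loc}$ and $a\in L_{1,loc}$ with $\varphi$ bounded; (ii) the passage $\delta\to0$ uses $u_\delta^\lambda\le 2^{\max(\lambda-1,0)}(u^\lambda+\delta^\lambda)\in L_{1,loc}$ as the dominating function.
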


\begin{Lemma}\label{L3.4}
Let $u \ge 0$ be a solution of~\eqref{L3.2.1} such that
\begin{equation}
	\operatorname*{ess\,inf}\limits_{
		{\mathbb R}^n
	}
	u
	=
	0,
	\label{4.8}
\end{equation}
where $n > p$. Then
$$
	\lim_{r \to \infty}
	\frac{
		\mes B_r \setminus \omega_\varepsilon
	}{
		\mes B_r
	}
	=
	0,
$$
where $\omega_\varepsilon = \{ x \in {\mathbb R}^n : u (x) < \varepsilon \}$.
\end{Lemma}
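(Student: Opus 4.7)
The plan is to combine a Chebyshev-type estimate with the weak Harnack inequality (Lemma~\ref{L3.2}) to show that the $L^\lambda$-average of $u$ over $B_r$ decays, which then forces the super-level set $\{u \ge \varepsilon\}$ to be asymptotically negligible. Fix any admissible exponent $\lambda \in (0, n(p-1)/(n-p))$. Since $B_r \setminus \omega_\varepsilon = B_r \cap \{u \ge \varepsilon\}$, Chebyshev's inequality gives
$$
\mes (B_r \setminus \omega_\varepsilon)
\le
\varepsilon^{-\lambda}
\int_{B_r} u^\lambda \, dx,
$$
so it suffices to prove that $(\mes B_r)^{-1}\int_{B_r} u^\lambda \, dx \to 0$ as $r \to \infty$.

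Next I apply the weak Harnack inequality to the ball $B_{r/2}$ (so that the outer ball $B_r$ appears on the left): this yields
$$
\frac{1}{\mes B_r}
\int_{B_r} u^\lambda \, dx
\le
C^\lambda
\bigl(\essinf_{B_{r/2}} u\bigr)^\lambda,
$$
and it remains to show $\essinf_{B_{r/2}} u \to 0$ as $r \to \infty$. This is a short monotonicity argument: the quantity $m_r := \essinf_{B_r} u$ is non-increasing in $r$, so it has a limit $m \ge 0$; but $u \ge m$ a.e.\ on each $B_r$, hence a.e.\ on $\bigcup_r B_r = \mathbb{R}^n$, so $m \le \essinf_{\mathbb{R}^n} u = 0$. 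Combining these three ingredients yields $\mes(B_r \setminus \omega_\varepsilon)/\mes B_r \to 0$, as desired.

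The only subtlety is the step that reduces global behavior to local behavior, namely the claim $\essinf_{B_r} u \to 0$; once this is in hand the weak Harnack estimate of Lemma~\ref{L3.2} closes the argument immediately. There is no serious obstacle, but care must be taken to pick $\lambda$ strictly below the threshold $n(p-1)/(n-p)$ (rather than $\lambda=1$), since when $p$ is close to $1$ the admissible range may not contain $1$.
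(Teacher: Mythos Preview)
Your proof is correct and is precisely the argument the paper has in mind: the paper does not spell out a proof but states that Lemma~\ref{L3.4} ``is a direct consequence of Lemma~\ref{L3.2},'' and your Chebyshev-plus-weak-Harnack argument is exactly how one extracts it. The only remark worth adding is that your care about the admissible range of $\lambda$ is well placed but ultimately moot, since any $\lambda \in (0, n(p-1)/(n-p))$ works and this interval is always nonempty when $n>p>1$.
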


Lemmas~\ref{L3.2} and~\ref{L3.3} are proved in~\cite{Serrin, Trudinger, TW}  and~\cite{AM}, respectively.
Lem\-ma~\ref{L3.4} or its equivalent statement can be found in~\cite[Lemma 3.1]{AMAdvMath}. This lemma is a direct consequence of Lemma~\ref{L3.2}.

Corollary~\ref{C3.1} and Lemmas~\ref{L3.2} and~\ref{L3.3} with $\lambda \in (p - 1, p) \cap (0, n (p - 1) / (n - p))$ imply the following assertion.

\begin{Corollary}\label{C3.2}
Let $u \ge 0$ be a solution of~\eqref{L3.3.1}, where $n > p$ and $a \in L_{1, loc} ({\mathbb R}^n)$ is a non-negative function.
Then 
$$
	\frac{1}{\mes B_r}
	\int_{\omega_\varepsilon \cap B_r}
	a (x)
	\,
	dx
	\le
	C
	r^{- p}
	\left(
		\operatorname*{ess\,inf}\limits_{
			B_r
		}
		u_\varepsilon
	\right)^{p - 1}
$$
for all $r \in (0, \infty)$, 
where 
$\omega_\varepsilon = \{ x \in {\mathbb R}^n : u (x) < \varepsilon \}$ 
and 
$
	u_\varepsilon 
	= 
	\chi_{\omega_\varepsilon} 
	u 
	+ 
	(1 - \chi_{\omega_\varepsilon})
	\varepsilon.
$
\end{Corollary}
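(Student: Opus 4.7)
The plan is to combine Corollary~\ref{C3.1} with Lemmas~\ref{L3.2} and~\ref{L3.3} in a short chain, exactly as the hint before the statement suggests. First I would apply Corollary~\ref{C3.1} to $u$ in order to pass to the truncation
$u_\varepsilon = \chi_{\omega_\varepsilon} u + (1-\chi_{\omega_\varepsilon}) \varepsilon$, which then satisfies
$-\operatorname{div} A(x,\nabla u_\varepsilon) \ge \chi_{\omega_\varepsilon}(x)\, a(x) \ge 0$ in ${\mathbb R}^n$. In particular, $u_\varepsilon$ is a non-negative weak supersolution of the homogeneous equation, so both Lemmas~\ref{L3.2} and~\ref{L3.3} apply to it. Moreover $u_\varepsilon \le \varepsilon$ almost everywhere, so $u_\varepsilon^\lambda \in L_{1,loc}({\mathbb R}^n)$ for every $\lambda > 0$ and no integrability issue obstructs Lemma~\ref{L3.3}.

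Next, I would fix $\lambda$ in the interval $(p-1,\, p)\cap(0,\, n(p-1)/(n-p))$, which is non-empty because $n(p-1)/(n-p) > p-1$ whenever $n > p$. Applying Lemma~\ref{L3.3} to $u_\varepsilon$ with right-hand side $\chi_{\omega_\varepsilon} a$ gives
$$
\frac{1}{\mes B_r}\int_{\omega_\varepsilon \cap B_r} a(x)\,dx
\le
C r^{-p}
\left(
\frac{1}{\mes(B_{2r}\setminus B_r)}
\int_{B_{2r}\setminus B_r} u_\varepsilon^\lambda\,dx
\right)^{(p-1)/\lambda}.
$$
Since $\mes(B_{2r}\setminus B_r)$ is comparable to $\mes B_{2r}$ up to a purely dimensional constant, the annular average on the right is dominated, after absorbing such a constant into $C$, by $(\mes B_{2r})^{-1} \int_{B_{2r}} u_\varepsilon^\lambda\,dx$.

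The final step is to insert Lemma~\ref{L3.2}: applied to the non-negative supersolution $u_\varepsilon$ on the concentric pair $B_r \subset B_{2r}$, and valid because the chosen $\lambda$ lies in $(0,\, n(p-1)/(n-p))$, it yields
$$
\left( \frac{1}{\mes B_{2r}} \int_{B_{2r}} u_\varepsilon^\lambda\,dx \right)^{1/\lambda}
\le
C \operatorname*{ess\,inf}_{B_r} u_\varepsilon.
$$
Raising this inequality to the power $p-1$ and chaining it with the Lemma~\ref{L3.3} bound above delivers the asserted estimate. There is essentially no obstacle: the argument is a mechanical composition of three preceding results. The only points worth flagging in the write-up are the non-emptiness of the admissible interval for $\lambda$, the innocuous annulus-to-ball comparison of averages, and the verification that $u_\varepsilon^\lambda \in L_{1,loc}$ so that Lemma~\ref{L3.3} is applicable.
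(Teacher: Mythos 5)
Your proposal is correct and reproduces precisely the argument the paper intends: the paper's own ``proof'' is the single sentence immediately preceding the corollary, which says that Corollary~\ref{C3.1} together with Lemmas~\ref{L3.2} and~\ref{L3.3} applied with $\lambda \in (p-1,p)\cap(0,n(p-1)/(n-p))$ imply the result, and you have merely spelled out that chaining step by step (including the correct observations that $u_\varepsilon\le\varepsilon$ guarantees $u_\varepsilon^\lambda\in L_{1,loc}$, that $\chi_{\omega_\varepsilon}a\ge0$ makes $u_\varepsilon$ a supersolution of the homogeneous inequality, and that the annulus-to-ball average comparison costs only a dimensional constant).
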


\begin{proof}[Proof of Theorem~\ref{T2.1}]
Assume the converse. Let $(u_1, u_2)$ be a non-negative solution of~\eqref{1.1}, \eqref{1.2} that is not equal to zero.
According to Lemma~\ref{L3.2}, both the functions $u_1$ and $u_2$ are positive almost everywhere in ${\mathbb R}^n$. 

We put
$\Omega_\varepsilon = \Omega_{1, \varepsilon} \cap \Omega_{2, \varepsilon}$,
where
$
	\Omega_{k, \varepsilon}
	=
	\{
		x \in {\mathbb R}^n
		:
		u_k (x) < \varepsilon
	\}
$,
$k = 1,2$.
By Lemma~\ref{L3.4}, 
$$
	\lim_{r \to \infty}
	\frac{
		\operatorname{mes} B_r \setminus \Omega_{k, \varepsilon}
	}{
		\operatorname{mes} B_r
	}
	=
	0,
	\quad
	k = 1,2.
$$
Hence, there is a real number $r_0 > 0$ such that
\begin{equation}
	\mes \Omega_\varepsilon \cap B_{\theta r} \setminus B_r
	\ge
	C
	r^n,
	\label{PT2.1.1}
\end{equation}
for all $r \ge r_0$.
Denote
$$
	E_1 (r)
	=
	\int_{
		\Omega_\varepsilon \cap B_r
	}
	F_2 (x, u_1)
	\,
	dx,
	\quad
	r > 0,
$$
and
$$
	E_2 (r)
	=
	\int_{
		\Omega_\varepsilon \cap B_r
	}
	F_1 (x, u_2)
	\,
	dx,
	\quad
	r > 0.
$$

Also let $r_i = \theta^i r_0$, $i = 1,2,\ldots$.
Corollary~\ref{C3.2} implies the estimates
$$
	\frac{
		1
	}{
		\mes B_{r_i}
	}
	\int_{
		\Omega_\varepsilon \cap B_{r_i}
	}
	F_1 (x, u_2)
	\,
	dx
	\le
	C
	r_i^{- p_1}
	\left(
		\operatorname*{ess\,inf}\limits_{
			B_{r_i}
		}
		u_{1, \varepsilon}
	\right)^{p_1 - 1},
	\quad
	i = 1,2,\ldots,
$$
and
$$
	\frac{
		1
	}{
		\mes B_{r_i}
	}
	\int_{
		\Omega_\varepsilon \cap B_{r_i}
	}
	F_2 (x, u_1)
	\,
	dx
	\le
	C
	r_i^{- p_2}
	\left(
		\operatorname*{ess\,inf}\limits_{
			B_{r_i}
		}
		u_{2, \varepsilon}
	\right)^{p_2 - 1},
	\quad
	i = 1,2,\ldots,
$$
where
\begin{equation}
	u_{k, \varepsilon }
	= 
	\chi_{
		\Omega_{k, \varepsilon}
	} 
	u_k 
	+ 
	(
		1 
		- 
		\chi_{
			\Omega_{k, \varepsilon}
		}
	)
	\varepsilon,
	\quad
	k = 1,2,
	\label{PT2.1.8}
\end{equation}
whence it follows that
\begin{equation}
	\operatorname*{ess\,inf}\limits_{
		\Omega_{1, \varepsilon} \cap B_{r_i}
	}
	u_1
	=
	\operatorname*{ess\,inf}\limits_{
		B_{r_i}
	}
	u_{1, \varepsilon}
	\ge
	\sigma
	\left(
		\frac{
			E_2 (r_i)
		}{
			r_i^{n - p_1} 
		}
	\right)^{1 / (p_1 - 1)},
	\quad
	i = 1,2,\ldots,
	\label{PT2.1.2}
\end{equation}
and
\begin{equation}
	\operatorname*{ess\,inf}\limits_{
		\Omega_{2, \varepsilon} \cap B_{r_i}
	}
	u_2
	=
	\operatorname*{ess\,inf}\limits_{
		B_{r_i}
	}
	u_{2, \varepsilon}
	\ge
	\sigma
	\left(
		\frac{
			E_1 (r_i)
		}{
			r_i^{n - p_2} 
		}
	\right)^{1 / (p_2 - 1)},
	\quad
	i = 1,2,\ldots.
	\label{PT2.1.3}
\end{equation}

Taking into account~\eqref{PT2.1.2} and the fact that ${F_1 (x, \cdot)}$ is a non-decreasing function on the interval $[0, \varepsilon]$ for almost $x \in {\mathbb R}^n$, we obtain
$$
	F_2 (x, u_1 (x))
	\ge
	F_2
	\left(
		x,
		\operatorname*{ess\,inf}\limits_{
			\Omega_{1, \varepsilon} \cap B_{r_i}
		}
		u_1
	\right)
	\ge
	F_2
	\left(
		x,
		\sigma
		\left(
			\frac{
				E_2 (r_i)
			}{
				r_i^{n - p_1} 
			}
		\right)^{1 / (p_1 - 1)}
	\right)
$$
for almost all $x \in \Omega_{1, \varepsilon} \cap B_{r_i}$, $i = 1,2,\ldots$.
This immediately implies that
\begin{align*}
	&
	\frac{
		1
	}{
		\operatorname{mes} \Omega_\varepsilon \cap B_{r_i} \setminus B_{r_{i-1}}
	}
	\int_{
		\Omega_\varepsilon \cap B_{r_i} \setminus B_{r_{i-1}}
	}
	F_2 (x, u_1)
	\,
	dx
	\\
	&
	\qquad
	{}
	\ge
	\operatorname*{ess\,inf}\limits_{
		x 
		\in
		\Omega_\varepsilon \cap B_{r_i} \setminus B_{r_{i-1}}
	}
	F_2
	\left(
		x,
		\sigma
		\left(
			\frac{
				E_2 (r_i)
			}{
				r_i^{n - p_1} 
			}
		\right)^{1 / (p_1 - 1)}
	\right),
	\quad
	i = 1,2,\ldots,
\end{align*}
whence in accordance with~\eqref{PT2.1.1} and the definition of $f_2$ we have
\begin{equation}
	E_1 (r_i) - E_1 (r_{i-1})
	\ge
	C
	r^n
	f_2
	\left(
		r,
		\sigma
		\left(
			\frac{
				E_2 (r_i)
			}{
				r_i^{n - p_1} 
			}
		\right)^{1 / (p_1 - 1)}
	\right)
	\label{PT2.1.5}
\end{equation}
for all $r \in (r_{i-1}, r_i)$, $i = 1,2,\ldots$.

Analogously, taking into account~\eqref{PT2.1.1} and~\eqref{PT2.1.3} and the definition of $f_1$, we obtain
\begin{equation}
	E_2 (r_i) - E_2 (r_{i-1})
	\ge
	C
	r^n
	f_1
	\left(
		r,
		\sigma
		\left(
			\frac{
				E_1 (r_i)
			}{
				r_i^{n - p_2} 
			}
		\right)^{1 / (p_2 - 1)}
	\right)
	\label{PT2.1.6}
\end{equation}
for all $r \in (r_{i-1}, r_i)$, $i = 1,2,\ldots$.
The last inequality, in particular, implies that
$$
	E_2 (r_i)
	\ge
	C
	r^n
	f_1
	\left(
		r,
		\sigma
		\left(
			\frac{
				E_1 (r_i)
			}{
				r_i^{n - p_2} 
			}
		\right)^{1 / (p_2 - 1)}
	\right)
$$
for all $r \in (r_{i-1}, r_i)$, $i = 1,2,\ldots$.
Combining this with~\eqref{PT2.1.5}, we arrive at the estimate
$$
	E_1 (r_i) - E_1 (r_{i-1})
	\ge
	C
	r^n
	f_2
	\left(
		r,
		\sigma
		r^{p_1 / (p_1 - 1)}
		f_1^{1 / (p_1 - 1)}
		\left(
			r,
			\left(
				\frac{
					\varkappa
					E_1 (r_i)
				}{
					r^{n - p_2} 
				}
			\right)^{1 / (p_2 - 1)}
		\right)
	\right)
$$
for all $r \in (r_{i-1}, r_i)$, $i = 1,2,\ldots$.
In the same way, it can be shown that
$$
	E_2 (r_i) - E_2 (r_{i-1})
	\ge
	C
	r^n
	f_1
	\left(
		r,
		\sigma
		r^{p_2 / (p_2 - 1)}
		f_2^{1 / (p_2 - 1)}
		\left(
			r,
			\left(
				\frac{
					\varkappa
					E_2 (r_i)
				}{
					r^{n - p_1} 
				}
			\right)^{1 / (p_1 - 1)}
		\right)
	\right)
$$
for all $r \in (r_{i-1}, r_i)$, $i = 1,2,\ldots$.

Thus, there are an integer $i_0 > 0$, non-decreasing functions 
$g_1, g_2 : [\zeta_*, \infty) \to (0, \infty)$, and locally bounded measurable functions 
$q_1, q_2 : [r_*, \infty) \to [0, \infty)$ such that 
$$
	E_s (r_i) - E_s (r_{i-1})
	\ge
	C
	r^n
	q_s (r)
	g_s (\varkappa E_s (r_i)),
	\quad
	s = 1, 2,
$$
for all $i > i_0$ and $r \in (r_{i-1}, r_i)$ and, moreover, conditions~\eqref{T2.1.1} and~\eqref{T2.1.2} are valid with some $k \in \{ 1, 2 \}$, where $r_* = r_{i_0}$ and $\zeta_* = \varkappa \operatorname{min} \{ E_1 (r_{i_0}), E_2 (r_{i_0}) \}$.
We obviously have
$$
	\frac{
		E_k (r_i) - E_k (r_{i-1})
	}{
		g_k (\varkappa E_k (r_i)),
	}
	\ge
	C
	r^n
	q_k (r)
$$
for all $i > i_0$ and $r \in (r_{i-1}, r_i)$, whence in accordance with the inequalities
$$
	\int_{
		E_k (r_{i-1})
	}^{
		E_k (r_i)
	}
	\frac{
		d\zeta
	}{
		g_k (\varkappa \zeta))
	}
	\ge
	\frac{
		E_k (r_i) - E_k (r_{i-1})
	}{
		g_k (\varkappa E_k (r_i)),
	}
$$
and
$$
	\sup_{
		r \in (r_{i-1}, r_i)
	}
	r^n
	q_k (r)
	\ge
	C
	\int_{
		r_{i-1}
	}^{
		r_i
	}
	r^{n - 1}
	q_k (r)
	\,
	dr
$$
it follows that
$$
	\int_{
		E_k (r_{i-1})
	}^{
		E_k (r_i)
	}
	\frac{
		d\zeta
	}{
		g_k (\varkappa \zeta))
	}
	\ge
	C
	\int_{
		r_{i-1}
	}^{
		r_i
	}
	r^{n - 1}
	q_k (r)
	\,
	dr,
	\quad
	i = i_0 + 1, i_0 + 2, \ldots.
$$
Summing the last expression over all $i > i_0$, we obtain
$$
	\int_{
		E_k (r_{i_0})
	}^\infty
	\frac{
		d\zeta
	}{
		g_k (\varkappa \zeta))
	}
	\ge
	C
	\int_{
		r_{i_0}
	}^\infty
	r^{n - 1}
	q_k (r)
	\,
	dr.
$$
This contradicts~\eqref{T2.1.1} and~\eqref{T2.1.2}.
\end{proof}

\section{Some remarks and generalizations}

As in the previous section, we denote by $C$, $\sigma$, and $\varkappa$ various positive constants that can depend only on $n$, $p_1$, $p_2$, $C_1$, $C_2$, $\varepsilon$, $\theta$, and $\lambda$.

\paragraph{4.1}
Consider systems of the form
$$
	\left\{
		\begin{aligned}
			&
			- \operatorname{div} A_1 (x, \nabla u_1)
			\ge
			a_1 (x) h_1 (u_2)
			&
			\mbox{in } {\mathbb R}^n,
			\\
			&
			- \operatorname{div} A_2 (x, \nabla u_2)
			\ge
			a_2 (x) h_2 (u_1)
			&
			\mbox{in } {\mathbb R}^n,
		\end{aligned}
	\right.
$$
where $a_1, a_2 : {\mathbb R}^n \to (0, \infty)$ and $h_1, h_2 : [0, \infty) \to [0, \infty)$ are measurable functions such that $h_1$ and $h_2$ are positive on the interval $(0, \infty)$ and non-decreasing on $[0, \varepsilon]$ for some real number $0 < \varepsilon < 1$. 
It does not present any particular problem to verify that for these systems Theorem~\ref{T2.1} remains valid with~\eqref{1.4} replaced by
\begin{equation}
	f_k (r, \zeta)
	=
	\frac{
		h_k (\zeta)
	}{
		\left(
			\frac{
				1
			}{
				\operatorname{mes} B_{\theta r} \setminus B_{r / \theta}
			}
			\int_{
				B_{\theta r} \setminus B_{r / \theta}
			}
			a_k^{- \lambda} (x)
			\,
			dx
		\right)^{1 / \lambda}
	},
	\quad
	r, \zeta > 0,
	\;
	k = 1, 2,
	\label{4.1}
\end{equation}
where $\lambda > 0$ is a real number.
In so doing, if
$$
	\int_{
		B_{\theta r} \setminus B_{r / \theta}
	}
	a_k^{- \lambda} (x)
	\,
	dx
	=
	\infty
$$
for some $r \in (0, \infty)$ and $k \in \{ 1, 2 \}$, then we assume that
$
	f_k (r, \zeta) = 0
$
for all $\zeta \in (0, \infty)$.

Indeed, inequality~\eqref{PT2.1.2} yields
$$
	h_2 (u_1 (x))
	\ge
	h_2
	\left(
		\sigma
		\left(
			\frac{
				E_2 (r_i)
			}{
				r_i^{n - p_1} 
			}
		\right)^{1 / (p_1 - 1)}
	\right)
$$
for almost all $x \in \Omega_{1, \varepsilon} \cap B_{r_i}$, whence it follows that
\begin{align*}
	&
	\frac{
		1
	}{
		\operatorname{mes} \Omega_\varepsilon \cap B_{r_i} \setminus B_{r_{i-1}}
	}
	\int_{
		\Omega_\varepsilon \cap B_{r_i} \setminus B_{r_{i-1}}
	}
	h_2^{\lambda / (1 + \lambda)} (u_1)
	\,
	dx
	\\
	&
	\qquad
	{}
	\ge
	h_2^{\lambda / (1 + \lambda)}
	\left(
		\sigma
		\left(
			\frac{
				E_2 (r_i)
			}{
				r_i^{n - p_1} 
			}
		\right)^{1 / (p_1 - 1)}
	\right),
	\quad
	i = 1,2,\ldots.
\end{align*}
Combining this with~\eqref{PT2.1.1} and the estimate
\begin{align*}
	&
	\int_{
		\Omega_\varepsilon \cap B_{r_i} \setminus B_{r_{i-1}}
	}
	h_2^{\lambda / (1 + \lambda)} (u_1)
	\,
	dx
	=
	\int_{
		\Omega_\varepsilon \cap B_{r_i} \setminus B_{r_{i-1}}
	}
	a_2^{- \lambda / (1 + \lambda)} (x)
	a_2^{\lambda / (1 + \lambda)} (x)
	h_2^{\lambda / (1 + \lambda)} (u_1)
	\,
	dx
	\\
	&
	\quad
	{}
	\le
	\left(
		\int_{
			\Omega_\varepsilon \cap B_{r_i} \setminus B_{r_{i-1}}
		}
		a_2^{- \lambda} (x)
		\,
		dx
	\right)^{1 / (1 + \lambda)}
	\left(
		\int_{
			\Omega_\varepsilon \cap B_{r_i} \setminus B_{r_{i-1}}
		}
		a_2 (x)
		h_2 (u_1)
		\,
		dx
	\right)^{\lambda / (1 + \lambda)}
\end{align*}
which follows from H\"older's inequality, we obtain
$$
	\int_{
		\Omega_\varepsilon \cap B_{r_i} \setminus B_{r_{i-1}}
	}
	a_2 (x)
	h_2 (u_1)
	\,
	dx
	\ge
	\frac{
		C
		r_i^n
		h_2
		\left(
			\sigma
			\left(
				\frac{
					E_2 (r_i)
				}{
					r_i^{n - p_1} 
				}
			\right)^{1 / (p_1 - 1)}
		\right)
	}{
		\left(
			\frac{
				1
			}{
				\operatorname{mes} B_{r_i} \setminus B_{r_{i-1}}
			}
			\int_{
				B_{r_i} \setminus B_{r_{i-1}}
			}
			a_2^{- \lambda} (x)
			\,
			dx
		\right)^{1 / \lambda}
	},
	\quad
	i = 1,2,\ldots.
$$
The last formula obviously implies~\eqref{PT2.1.5} with
$$
	E_1 (r)
	=
	\int_{
		\Omega_\varepsilon \cap B_r
	}
	a_2 (x)
	h_2 (u_1)
	\,
	dx,
	\quad
	r > 0,
$$
and $f_2$ defined by~\eqref{4.1}.

Repeating the previous reasoning with~\eqref{PT2.1.2} replaced by~\eqref{PT2.1.3}, we also have
$$
	\int_{
		\Omega_\varepsilon \cap B_{r_i} \setminus B_{r_{i-1}}
	}
	a_1 (x)
	h_1 (u_2)
	\,
	dx
	\ge
	\frac{
		C
		r_i^n
		h_1
		\left(
			\sigma
			\left(
				\frac{
					E_2 (r_i)
				}{
					r_i^{n - p_2} 
				}
			\right)^{1 / (p_2 - 1)}
		\right)
	}{
		\left(
			\frac{
				1
			}{
				\operatorname{mes} B_{r_i} \setminus B_{r_{i-1}}
			}
			\int_{
				B_{r_i} \setminus B_{r_{i-1}}
			}
			a_1^{- \lambda} (x)
			\,
			dx
		\right)^{1 / \lambda}
	},
	\quad
	i = 1,2,\ldots.
$$
This in turn implies~\eqref{PT2.1.6} with
$$
	E_2 (r)
	=
	\int_{
		\Omega_\varepsilon \cap B_r
	}
	a_1 (x)
	h_1 (u_2)
	\,
	dx,
	\quad
	r > 0,
$$
and $f_1$ defined by~\eqref{4.1}.

\paragraph{4.2}
An interesting case is when $F_1$ and $F_2$ on the right in~\eqref{1.1} are non-decreasing functions with respect to the last argument on the whole interval $[0, \infty)$. Fix some real number $0 < \varepsilon < 1$. Also let for any real numbers $\varepsilon_*, \zeta_* > 0$ there be a real number $r_* > 0$ and functions
$q_k : {\mathbb R}^n \setminus B_{r_*} \to [0, \infty)$
and
$g_k : [\zeta_*, \infty) \to (0, \infty)$, $k = 1,2$,
satisfying the following conditions:
\begin{enumerate}
\item[$(a')$]
for almost all $x \in {\mathbb R}^n \setminus B_{r_*}$ and for all $\zeta \in [\zeta_*, \infty)$ such that
$$
	\left(
		\frac{
			\zeta
		}{
			|x|^{n - p_2}
		}
	\right)^{1 / (p_2 - 1)}
	\le
	\varepsilon
	\quad
	\mbox{and}
	\quad
	\varepsilon_*
	|x|^{p_1 / (p_1 - 1)}
	f_1^{1 / (p_1 - 1)}
	\left(
		|x|, 
		\left(
			\frac{
				\zeta
			}{
				|x|^{n - p_2}
			}
		\right)^{1 / (p_2 - 1)}
	\right)
	\le
	\varepsilon
$$
we have
\begin{equation}
	F_2 
	\left( 
		x,
		\varepsilon_*
		|x|^{p_1 / (p_1 - 1)}
		f_1^{1 / (p_1 - 1)}
		\left(
			|x|,
			\left(
				\frac{
					\zeta
				}{
					|x|^{n - p_2}
				}
			\right)^{1 / (p_2 - 1)}
		\right)
	\right)
	\ge
	q_1 (x) 
	g_1 (\zeta);
	\label{1.5new}
\end{equation}
\item[$(b')$]
for almost all $x \in {\mathbb R}^n \setminus B_{r_*}$ and for all $\zeta \in [\zeta_*, \infty)$ such that
$$
	\left(
		\frac{
			\zeta
		}{
			|x|^{n - p_1}
		}
	\right)^{1 / (p_1 - 1)}
	\le
	\varepsilon
	\quad
	\mbox{and}
	\quad
	\varepsilon_*
	|x|^{p_2 / (p_2 - 1)}
	f_2^{1 / (p_2 - 1)}
	\left(
		|x|,
		\left(
			\frac{
				\zeta
			}{
				|x|^{n - p_1}
			}
		\right)^{1 / (p_1 - 1)}
	\right)
	\le
	\varepsilon
$$
we have
\begin{equation}
	F_1
	\left(
		x,
		\varepsilon_*
		|x|^{p_2 / (p_2 - 1)}
		f_2^{1 / (p_2 - 1)}
		\left(
			|x|,
			\left(
				\frac{
					\zeta
				}{
					|x|^{n - p_1}
				}
			\right)^{1 / (p_1 - 1)}
		\right)
	\right)
	\ge
	q_2 (x) 
	g_2 (\zeta),
	\label{1.6new}
\end{equation}
\end{enumerate}
where
$$
	f_k (r, \zeta)
	=
	\frac{
		1
	}{
		\operatorname{mes} B_r
	}
	\int_{B_r}
	F_k (x, \zeta)
	\,
	dx,
	\quad
	r, \zeta > 0,
	\;
	k = 1, 2.
$$

\begin{Theorem}\label{T4.1}
Suppose that $n > \operatorname{max} \{ p_1, p_2 \}$ and, moreover, the functions $F_1$ and $F_2$ are non-negative and non-decreasing with respect to the last arguments on the set ${\mathbb R}^n \times [0, \infty)$ and positive on ${\mathbb R}^n \times (0, \infty)$.
Also let for any real numbers $\varepsilon_*, \zeta_* > 0$ there be a real number $r_* > 0$, non-decreasing functions 
$g_1, g_2 : [\zeta_*, \infty) \to (0, \infty)$, and locally bounded measurable functions 
$q_1, q_2 : {\mathbb R}^n \setminus B_{r_*} \to [0, \infty)$ such that $(a')$ and $(b')$ are valid. 
If there exist $k \in \{ 1, 2 \}$ such that~\eqref{T2.1.1} holds and 
\begin{equation}
	\int_{
		{\mathbb R}^n \setminus B_{r_*}
	}
	q_k (x)
	\,
	dx
	=
	\infty,
	\label{T4.1.1}
\end{equation}
then any non-negative solution of~\eqref{1.1}, \eqref{1.2} is identically zero.
\end{Theorem}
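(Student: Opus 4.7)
The plan is to mimic the proof of Theorem~\ref{T2.1}, with two substantive changes dictated by the new hypotheses. Since $F_1$ and $F_2$ are now non-decreasing on all of $[0,\infty)$, one may bypass Corollary~\ref{C3.2} and chain Lemmas~\ref{L3.2} and~\ref{L3.3} directly for $u_k$ rather than for the truncation $u_{k,\varepsilon}$. With
\[
	\tilde E_1(r) = \int_{B_r} F_2(x,u_1)\,dx,\qquad \tilde E_2(r) = \int_{B_r} F_1(x,u_2)\,dx,
\]
and $\lambda\in(p_k-1,p_k)\cap(0,n(p_k-1)/(n-p_k))$, this chaining yields
\[
	\operatorname*{ess\,inf}\limits_{B_r} u_k \ge \sigma\bigl(\tilde E_j(r)/r^{n-p_k}\bigr)^{1/(p_k-1)} =: U_k(r)
\]
(with $j$ complementary to $k$), a strengthening of the estimate underlying Theorem~\ref{T2.1} in that the right-hand side involves the full integral over $B_r$. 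Combined with the monotonicity of $F_1(y,\cdot)$ on $[0,\infty)$, this gives
\[
	\tilde E_2(r_i) \ge \int_{B_{r_i}} F_1\bigl(y,U_2(r_i)\bigr)\,dy = (\operatorname{mes} B_{r_i})\,f_1\bigl(r_i,U_2(r_i)\bigr),
\]
the natural analogue of~\eqref{PT2.1.6} for the mean-integral definition of $f_1$, and the symmetric statement for $\tilde E_1$. As in the proof of Theorem~\ref{T2.1}, Lemma~\ref{L3.4} forces $U_k(r_i)<\varepsilon$ for $r_i$ past some threshold.

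Assume without loss of generality that~\eqref{T4.1.1} holds for $k=1$ and set $r_i=\theta^i r_0$. The crux is to choose $\varkappa$ and $\varepsilon_*$ so that $\zeta_i:=\varkappa\tilde E_1(r_i)$ makes $(a')$ applicable at every $x$ in the annulus $B_{r_i}\setminus B_{r_{i-1}}$. Taking $\varkappa = \sigma^{p_2-1}\theta^{-(n-p_2)}$ forces $(\zeta_i/|x|^{n-p_2})^{1/(p_2-1)}\le U_2(r_i)<\varepsilon$ there, which is the first constraint of $(a')$. The dominance
\[
	U_1(r_i) \ge \varepsilon_*|x|^{p_1/(p_1-1)}\,f_1^{1/(p_1-1)}\bigl(|x|,(\zeta_i/|x|^{n-p_2})^{1/(p_2-1)}\bigr)
\]
then follows from the inequalities $|x|\le r_i$ and $\operatorname{mes} B_{|x|}\ge \theta^{-n}\operatorname{mes} B_{r_i}$ (since $|x|\ge r_i/\theta$), the monotonicity of $f_1$ in its second argument, and the displayed lower bound on $\tilde E_2(r_i)$, provided $\varepsilon_*$ is chosen small in terms of $\sigma,\theta,n,p_1$. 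The second constraint of $(a')$ then holds because $U_1(r_i)<\varepsilon$.

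Monotonicity of $F_2(x,\cdot)$ together with $(a')$ now produces $F_2(x,u_1(x))\ge q_1(x)\,g_1(\varkappa\tilde E_1(r_i))$ almost everywhere on the annulus; integrating gives the iteration
\[
	\tilde E_1(r_i)-\tilde E_1(r_{i-1}) \ge g_1\bigl(\varkappa\tilde E_1(r_i)\bigr)\int_{B_{r_i}\setminus B_{r_{i-1}}} q_1(x)\,dx.
\]
Summing over $i>i_0$ and applying the monotonicity of $g_1$ exactly as in the closing argument for Theorem~\ref{T2.1} yields $\int_{\tilde E_1(r_{i_0})}^{\infty} d\zeta/g_1(\varkappa\zeta)\ge C\int_{\mathbb{R}^n\setminus B_{r_{i_0}}} q_1(x)\,dx$, which contradicts~\eqref{T2.1.1} together with~\eqref{T4.1.1}. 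The case $k=2$ is handled symmetrically via $(b')$. The principal obstacle is the calibration in the second paragraph: one must exhibit a single scale $\zeta\asymp\tilde E_1(r_i)$ for which $(a')$ is uniformly applicable across the annulus, compensating for the lack of monotonicity in $r$ of the mean-integral $f_1(r,\zeta)$.
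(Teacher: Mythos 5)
Your proof is correct and takes essentially the same route as the paper: drop the truncation and Corollary~\ref{C3.2}, apply Lemmas~\ref{L3.2} and~\ref{L3.3} directly to $u_k$, form the full-ball energies $\tilde E_1, \tilde E_2$, exploit the global monotonicity of $F_1, F_2$, invoke $(a')$/$(b')$ pointwise on the dyadic annuli, and close with the summation argument. Your explicit calibration of $\varkappa$ and $\varepsilon_*$ --- in particular the comparison $f_1(|x|,\cdot)\le(r_i/|x|)^n f_1(r_i,\cdot)$ that absorbs the lack of monotonicity of the ball-average in $r$ --- usefully spells out a step the paper's proof asserts without detail; the only slip is attributive: the bound $U_k(r_i)<\varepsilon$ here follows directly from~\eqref{1.2} (take $r_0$ with $\operatorname*{ess\,inf}_{B_{r_0}}u_k<\varepsilon$ and use monotonicity of the essential infimum under inclusion), not from Lemma~\ref{L3.4}, which the proof of Theorem~\ref{T4.1} does not invoke.
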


\begin{proof}
Arguing by contradiction, we assume that $(u_1, u_2)$ is a non-negative solution of~\eqref{1.1}, \eqref{1.2} that is not equal to zero.
By Lemma~\ref{L3.2}, both the functions $u_1$ and $u_2$ are positive almost everywhere in ${\mathbb R}^n$. 
We take a real number $r_0 > 0$ such that
\begin{equation}
	\operatorname*{ess\,inf}\limits_{
		B_{r_0}
	}
	u_k
	<
	\varepsilon,
	\quad
	k = 1, 2.
	\label{PT4.1.1}
\end{equation}
In view of~\eqref{1.2}, such a real number $r_0$ obviously exists.
Let us put $r_i = 2^i$, $i = 1,2,\ldots$.

Applying Lemmas~\ref{L3.2} and~\ref{L3.3} with $\lambda \in (p - 1, p) \cap (0, n (p - 1) / (n - p))$, we obtain
$$
	\frac{
		1
	}{
		\mes B_{r_i}
	}
	\int_{
		B_{r_i}
	}
	F_1 (x, u_2)
	\,
	dx
	\le
	C
	r_i^{- p_1}
	\left(
		\operatorname*{ess\,inf}\limits_{
			B_{r_i}
		}
		u_1
	\right)^{p_1 - 1},
	\quad
	i = 1,2,\ldots,
$$
and
$$
	\frac{
		1
	}{
		\mes B_{r_i}
	}
	\int_{
		B_{r_i}
	}
	F_2 (x, u_1)
	\,
	dx
	\le
	C
	r_i^{- p_2}
	\left(
		\operatorname*{ess\,inf}\limits_{
			B_{r_i}
		}
		u_2
	\right)^{p_2 - 1},
	\quad
	i = 1,2,\ldots,
$$
whence it follows that
\begin{equation}
	\operatorname*{ess\,inf}\limits_{
		B_{r_i}
	}
	u_1
	\ge
	\sigma
	\left(
		\frac{
			E_2 (r_i)
		}{
			r_i^{n - p_1} 
		}
	\right)^{1 / (p_1 - 1)},
	\quad
	i = 1,2,\ldots,
	\label{4.3}
\end{equation}
and
\begin{equation}
	\operatorname*{ess\,inf}\limits_{
		B_{r_i}
	}
	u_2
	\ge
	\sigma
	\left(
		\frac{
			E_1 (r_i)
		}{
			r_i^{n - p_2} 
		}
	\right)^{1 / (p_2 - 1)},
	\quad
	i = 1,2,\ldots,
	\label{4.4}
\end{equation}
where
$$
	E_1 (r)
	=
	\int_{
		B_r
	}
	F_2 (x, u_1)
	\,
	dx,
	\quad
	r > 0,
$$
and
$$
	E_2 (r)
	=
	\int_{
		B_r
	}
	F_1 (x, u_2)
	\,
	dx,
	\quad
	r > 0.
$$

Inequalities~\eqref{4.3} and~\eqref{4.4} imply the estimates
$$
	F_2 (x, u_1 (x))
	\ge
	F_2
	\left(
		x,
		\operatorname*{ess\,inf}\limits_{
			B_{r_i}
		}
		u_1
	\right)
	\ge
	F_2
	\left(
		x,
		\sigma
		\left(
			\frac{
				E_2 (r_i)
			}{
				r_i^{n - p_1} 
			}
		\right)^{1 / (p_1 - 1)}
	\right)
$$
and
$$
	F_1 (x, u_2 (x))
	\ge
	F_1
	\left(
		x,
		\operatorname*{ess\,inf}\limits_{
			B_{r_i}
		}
		u_2
	\right)
	\ge
	F_1
	\left(
		x,
		\sigma
		\left(
			\frac{
				E_1 (r_i)
			}{
				r_i^{n - p_2} 
			}
		\right)^{1 / (p_2 - 1)}
	\right)
$$
for almost all $x \in B_{r_i}$, $i = 1,2,\ldots$.
Integrating them over $B_{r_i}$, we have
$$
	E_1 (r_i)
	\ge
	\operatorname{mes} B_r
	f_2
	\left(
		r_i,
		\sigma
		\left(
			\frac{
				E_2 (r_i)
			}{
				r_i^{n - p_1} 
			}
		\right)^{1 / (p_1 - 1)}
	\right),
	\quad
	i = 1,2,\ldots,
$$
and
$$
	E_2 (r_i)
	\ge
	\operatorname{mes} B_r
	f_1
	\left(
		r_i,
		\sigma
		\left(
			\frac{
				E_1 (r_i)
			}{
				r_i^{n - p_2} 
			}
		\right)^{1 / (p_2 - 1)}
	\right),
	\quad
	i = 1,2,\ldots.
$$
Consequently, one can assert that
$$
	F_2 (x, u_1 (x))
	\ge
	F_2
	\left(
		x,
		\sigma
		r_i^{p_1 / (p_1 - 1)} 
		f_1^{1 / (p_1 - 1)}
		\left(
			r_i,
			\left(
				\frac{
					\varkappa
					E_1 (r_i)
				}{
					r_i^{n - p_2} 
				}
			\right)^{1 / (p_2 - 1)}
		\right)
	\right)
$$
and
$$
	F_1 (x, u_2 (x))
	\ge
	F_1
	\left(
		x,
		\sigma
		r_i^{p_2 / (p_2 - 1)} 
		f_2^{1 / (p_2 - 1)}
		\left(
			r_i,
			\left(
				\frac{
					\varkappa
					E_2 (r_i)
				}{
					r_i^{n - p_1} 
				}
			\right)^{1 / (p_1 - 1)}
		\right)
	\right)
$$
for almost all $x \in B_{r_i}$, $i = 1,2,\ldots$.
In view of conditions~$(a')$ and~$(b')$, this yields
$$
	F_2 (x, u_1 (x))
	\ge
	q_1 (x) 
	g_1 (\varkappa E_1 (r_i))
$$
and
$$
	F_1 (x, u_2 (x))
	\ge
	q_2 (x) 
	g_2 (\varkappa E_2 (r_i))
$$
for almost all $x \in B_{r_i} \setminus B_{r_{i-1}}$, $i = i_0 + 1, i_0 + 2,\ldots$,
where $i_0 \ge 0$ is some integer.
Integrating further the last two inequalities over $B_{r_i} \setminus B_{r_{i-1}}$, we obtain
\begin{equation}
	E_1 (r_i) - E_1 (r_{i-1})
	\ge
	g_1 (\varkappa E_1 (r_i))
	\int_{
		B_{r_i} \setminus B_{r_{i-1}}
	}
	q_1 (x)
	\,
	dx,
	\quad
	i = i_0 + 1, i_0 + 2, \ldots,
	\label{4.5}
\end{equation}
and
\begin{equation}
	E_2 (r_i) - E_2 (r_{i-1})
	\ge
	g_2 (\varkappa E_2 (r_i))
	\int_{
		B_{r_i} \setminus B_{r_{i-1}}
	}
	q_2 (x)
	\,
	dx,
	\quad
	i = i_0 + 1, i_0 + 2, \ldots.
	\label{4.6}
\end{equation}

In can be assumed without loss of generality that~\eqref{T2.1.1} and~\eqref{T4.1.1} are valid for ${k = 1}$. It follows from~\eqref{4.5} that
$$
	\sum_{i = i_0 + 1}^\infty
	\frac{
		E_1 (r_i) - E_1 (r_{i-1})
	}{
		g_1 (\varkappa E_1 (r_i))
	}
	\ge
	\int_{
		{\mathbb R}^n \setminus B_{r_{i_0}}
	}
	q_1 (x)
	\,
	dx,
$$
whence in accordance with the evident estimates
$$
	\int_{
		E_1 (r_{i-1})
	}^{
		E_1 (r_i)
	}
	\frac{
		d\zeta
	}{
		g_1 (\varkappa \zeta)
	}
	\ge
	\frac{
		E_1 (r_i) - E_1 (r_{i-1})
	}{
		g_1 (\varkappa E_1 (r_i))
	},
	\quad
	i = i_0 + 1, i_0 + 2, \ldots,
$$
we have
$$
	\int_{
		E_1 (r_{i_0})
	}^\infty
	\frac{
		d\zeta
	}{
		g_1 (\varkappa \zeta)
	}
	\ge
	\int_{
		{\mathbb R}^n \setminus B_{r_{i_0}}
	}
	q_1 (x)
	\,
	dx.
$$
This contradicts~\eqref{T2.1.1} and~\eqref{T4.1.1}. 

To complete the proof, it remains to note that, in the case where~\eqref{T2.1.1} and~\eqref{T4.1.1} are valid for $k = 2$, we repeat our reasoning with~\eqref{4.5} replaced by~\eqref{4.6}.
\end{proof}

\paragraph{4.3}
The above method is suitable for non-negative solutions of the inequality
\begin{equation}
	- \operatorname{div} A (x, \nabla u) 
	\ge 
	F (x, u)
	\quad
	\mbox{in } {\mathbb R}^n
	\label{4.7}
\end{equation}
for which~\eqref{4.8} is valid,
where $A$ is a Caratheodory function satisfying the uniform ellipticity condition~\eqref{3.0} 
and the function $F$ is non-negative on ${\mathbb R}^n \times [0, \varepsilon]$, positive on ${\mathbb R}^n \times (0, \varepsilon)$, and non-decreasing with respect to the last argument on the interval $[0, \varepsilon]$ for some real number $0 < \varepsilon < 1$.

Denote
\begin{equation}
	f (r, \zeta)
	=
	\operatorname*{ess\,inf}\limits_{
		x \in B_{\theta r} \setminus B_{r / \theta}
	}
	F (x, \zeta),
	\quad
	r, \zeta > 0,
	\label{4.9}
\end{equation}
where $\theta > 1$ is some given real number.
We shall assume that for any real number $\zeta_* > 0$ there exist
a real number $r_* > 0$ and functions
$q : [r_*, \infty) \to [0, \infty)$
and
$g : [\zeta_*, \infty) \to (0, \infty)$
satisfying the following condition:
\begin{enumerate}
\item[$(c)$]
for all $r \in [r_*, \infty)$ and $\zeta \in [\zeta_*, \infty)$ such that
$$
	\left(
		\frac{
			\zeta
		}{
			r^{n - p}
		}
	\right)^{1 / (p - 1)}
	\le
	\varepsilon
$$
we have
$$
	f 
	\left( 
		r,
		\left(
			\frac{
				\zeta
			}{
				r^{n - p}
			}
		\right)^{1 / (p - 1)}
	\right)
	\ge
	q (r) 
	g (\zeta).
$$
\end{enumerate}

\begin{Theorem}\label{T4.2}
Suppose that $n > p$. 
Also let for any real number $\zeta_* > 0$ there exist a real number $r_* > 0$, a locally bounded measurable function
$q : [r_*, \infty) \to [0, \infty)$,
and a non-decreasing function 
$g : [\zeta_*, \infty) \to (0, \infty)$
satisfying condition~$(c)$ and, moreover,
\begin{equation}
	\int_{\zeta_*}^\infty
	\frac{
		d\zeta
	}{
		g (\zeta)
	}
	<
	\infty
	\label{T2.4.1}
\end{equation}
and
\begin{equation}
	\int_{r_*}^\infty
	r^{n - 1}
	q (r)
	\,
	dr
	=
	\infty.
	\label{T2.4.2}
\end{equation}
Then any non-negative solution of~\eqref{4.7}, \eqref{4.8} is identically zero.
\end{Theorem}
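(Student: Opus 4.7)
The plan is to argue by contradiction, following the scalar analogue of the proof of Theorem~\ref{T2.1}. Assume $u$ is a non-negative solution of~\eqref{4.7}, \eqref{4.8} that is not identically zero; by Lemma~\ref{L3.2} (applied to the inequality $-\operatorname{div} A(x,\nabla u) \ge 0$, which follows from $F \ge 0$), $u$ is positive almost everywhere. As in the proof of Theorem~\ref{T2.1}, define $\omega_\varepsilon = \{x \in {\mathbb R}^n : u(x) < \varepsilon\}$; Lemma~\ref{L3.4} gives a real number $r_0 > 0$ such that $\mes(\omega_\varepsilon \cap B_{\theta r} \setminus B_r) \ge C r^n$ for all $r \ge r_0$, and we set $r_i = \theta^i r_0$ and
$$
    E(r) = \int_{\omega_\varepsilon \cap B_r} F(x, u)\,dx.
$$

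The next step is to apply Corollary~\ref{C3.2} to $u$ on each ball $B_{r_i}$, which yields
$$
    \operatorname*{ess\,inf}_{\omega_\varepsilon \cap B_{r_i}} u
    \;\ge\;
    \sigma
    \left(
        \frac{E(r_i)}{r_i^{n-p}}
    \right)^{1/(p-1)}.
$$
On $\omega_\varepsilon \cap B_{r_i}$ we have $u < \varepsilon$, so the monotonicity of $F(x,\cdot)$ on $[0,\varepsilon]$ gives
$$
    F(x, u(x))
    \ge
    F\!\left(x,\,\sigma\bigl(E(r_i)/r_i^{n-p}\bigr)^{1/(p-1)}\right)
$$
for almost every $x \in \omega_\varepsilon \cap B_{r_i}$, provided $\sigma (E(r_i)/r_i^{n-p})^{1/(p-1)} \le \varepsilon$. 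Averaging this bound over $\omega_\varepsilon \cap B_{r_i} \setminus B_{r_{i-1}}$, using the lower measure estimate from Lemma~\ref{L3.4} together with the definition~\eqref{4.9} of $f$, produces a telescoping-type inequality
$$
    E(r_i) - E(r_{i-1})
    \ge
    C\,r^n\,
    f\!\left(r,\,\sigma\bigl(\varkappa E(r_i)/r^{n-p}\bigr)^{1/(p-1)}\right)
    \quad\text{for all } r \in (r_{i-1}, r_i).
$$
Then condition~$(c)$ (applied with $\zeta_* = \varkappa E(r_{i_0})$ for a suitable threshold index $i_0$) bounds this below by $C r^n q(r) g(\varkappa E(r_i))$.

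Finally, I would divide by $g(\varkappa E(r_i))$, use the monotonicity of $g$ to replace the difference quotient by an integral,
$$
    \frac{E(r_i) - E(r_{i-1})}{g(\varkappa E(r_i))}
    \le
    \int_{E(r_{i-1})}^{E(r_i)}
    \frac{d\zeta}{g(\varkappa \zeta)},
$$
take the supremum over $r \in (r_{i-1}, r_i)$ of the right side and bound it below by an integral against $r^{n-1} q(r)$, and sum over $i > i_0$. The resulting inequality
$$
    \int_{E(r_{i_0})}^\infty \frac{d\zeta}{g(\varkappa\zeta)}
    \ge
    C \int_{r_{i_0}}^\infty r^{n-1} q(r)\,dr
$$
contradicts the combination of~\eqref{T2.4.1} and~\eqref{T2.4.2}. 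The one point requiring care is verifying a priori that the argument of $f$ satisfies the smallness hypothesis in~$(c)$: this is handled by choosing $i_0$ large enough that $\sigma (E(r_i)/r_i^{n-p})^{1/(p-1)} \le \varepsilon$ for all $i > i_0$, which is where Lemma~\ref{L3.2} (giving integrability of $u^\lambda$) and the openness of the threshold condition come together. This step is the main technical obstacle; once it is in place, the remainder is a direct single-component reduction of the calculation already carried out in Section~\ref{proof}.
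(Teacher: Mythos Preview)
Your proposal is correct and follows essentially the same route as the paper's own proof. One minor clarification: the smallness condition you flag as ``the main technical obstacle'' is in fact automatic --- since $u_\varepsilon \le \varepsilon$ everywhere by construction, the lower bound from Corollary~\ref{C3.2} gives $\sigma\bigl(E(r_i)/r_i^{n-p}\bigr)^{1/(p-1)} \le \operatorname*{ess\,inf}_{B_{r_i}} u_\varepsilon \le \varepsilon$ directly, with no need to invoke integrability of $u^\lambda$ or a further choice of $i_0$ for this purpose (the index $i_0$ is needed only to ensure $r_{i-1} \ge r_*$ once $\zeta_*$ has been fixed).
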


\begin{proof}
By contradiction, we assume that~\eqref{4.7}, \eqref{4.8} has a solution $u \ge 0$ that is not identically zero. According to Lemma~\ref{3.2}, this solution is positive almost everywhere in ${\mathbb R}^n$.
We denote 
$
	u_\varepsilon 
	= 
	\chi_{\Omega_\varepsilon} 
	u 
	+ 
	(1 - \chi_{\Omega_\varepsilon})
	\varepsilon,
$
where
$
	\Omega_\varepsilon 
	= 
	\{ 
		x \in {\mathbb R}^n 
		: 
		u (x) < \varepsilon 
	\}
$ 
and $\chi_{\Omega_\varepsilon}$ is the characteristic function of the set $\Omega_\varepsilon$. 
Also let
$$
	E (r)
	=
	\int_{
		\Omega_\varepsilon \cap B_r
	}
	F (x, u)
	\,
	dx,
	\quad
	r > 0.
$$

By Lemma~\ref{L3.4} there is a real number $r_0 > 0$ such that~\eqref{PT2.1.1} is valid for all $r \ge r_0$. As in the proof of Theorem~\ref{T2.1}, we put $r_i = \theta^i r_0$, $i = 1,2,\ldots$. Corollary~\ref{C3.2} yields
$$
	\frac{
		1
	}{
		\mes B_{r_i}
	}
	\int_{
		\Omega_\varepsilon \cap B_{r_i}
	}
	F (x, u)
	\,
	dx
	\le
	C
	r_i^{- p}
	\left(
		\operatorname*{ess\,inf}\limits_{
			B_{r_i}
		}
		u_\varepsilon
	\right)^{p - 1},
	\quad
	i = 1,2,\ldots,
$$
whence it follows that
\begin{equation}
	\operatorname*{ess\,inf}\limits_{
		\Omega_\varepsilon \cap B_{r_i}
	}
	u
	=
	\operatorname*{ess\,inf}\limits_{
		B_{r_i}
	}
	u_\varepsilon
	\ge
	\sigma
	\left(
		\frac{
			E (r_i)
		}{
			r_i^{n - p} 
		}
	\right)^{1 / (p - 1)},
	\quad
	i = 1,2,\ldots.
	\label{PT4.2.2}
\end{equation}
This in turn implies the estimate
$$
	F (x, u (x))
	\ge
	F
	\left(
		x,
		\operatorname*{ess\,inf}\limits_{
			\Omega_\varepsilon \cap B_{r_i}
		}
		u
	\right)
	\ge
	F
	\left(
		x,
		\sigma
		\left(
			\frac{
				E (r_i)
			}{
				r_i^{n - p} 
			}
		\right)^{1 / (p - 1)}
	\right)
$$
for almost all $x \in \Omega_\varepsilon \cap B_{r_i}$, $i = 1,2,\ldots$.
Consequently, 
\begin{align*}
	&
	\frac{
		1
	}{
		\operatorname{mes} \Omega_\varepsilon \cap B_{r_i} \setminus B_{r_{i-1}}
	}
	\int_{
		\Omega_\varepsilon \cap B_{r_i} \setminus B_{r_{i-1}}
	}
	F (x, u)
	\,
	dx
	\\
	&
	\qquad
	{}
	\ge
	\operatorname*{ess\,inf}\limits_{
		x 
		\in
		\Omega_\varepsilon \cap B_{r_i} \setminus B_{r_{i-1}}
	}
	F
	\left(
		x,
		\sigma
		\left(
			\frac{
				E (r_i)
			}{
				r_i^{n - p} 
			}
		\right)^{1 / (p - 1)}
	\right),
	\quad
	i = 1,2,\ldots,
\end{align*}
whence in accordance with~\eqref{PT2.1.1} and~\eqref{4.9} we have
$$
	E (r_i) - E (r_{i-1})
	\ge
	C
	r^n
	f
	\left(
		r,
		\left(
			\frac{
				\varkappa
				E (r_i)
			}{
				r^{n - p} 
			}
		\right)^{1 / (p - 1)}
	\right)
$$
for all $r \in (r_{i-1}, r_i)$, $i = 1,2,\ldots$.
By condition~$(c)$, this implies that
\begin{equation}
	\frac{
		E (r_i) - E (r_{i-1})
	}{
		g (\varkappa E (r_i))
	}
	\ge
	C
	r^n
	q (r)
	\label{PT2.2.3}
\end{equation}
for all $r \in (r_{i-1}, r_i)$, $i = i_0 + 1, i_0 + 2,\ldots$,
where $i_0 \ge 0$ is some integer.
Combining the last estimate with the inequalities
\begin{equation}
	\int_{
		E (r_{i-1})
	}^{
		E (r_i)
	}
	\frac{
		d\zeta
	}{
		g (\varkappa \zeta))
	}
	\ge
	\frac{
		E (r_i) - E (r_{i-1})
	}{
		g (\varkappa E (r_i))
	}
	\label{PT4.2.1}
\end{equation}
and
$$
	\sup_{
		r \in (r_{i-1}, r_i)
	}
	r^n
	q (r)
	\ge
	C
	\int_{
		r_{i-1}
	}^{
		r_i
	}
	r^{n - 1}
	q (r)
	\,
	dr,
$$
we obtain
$$
	\int_{
		E (r_{i-1})
	}^{
		E (r_i)
	}
	\frac{
		d\zeta
	}{
		g (\varkappa \zeta))
	}
	\ge
	C
	\int_{
		r_{i-1}
	}^{
		r_i
	}
	r^{n - 1}
	q (r)
	\,
	dr,
	\quad
	i = i_0 + 1, i_0 + 2,\ldots.
$$
Summing the last expression over all integers $i > i_0$, one can conclude that
$$
	\int_{
		E (r_{i_0})
	}^\infty
	\frac{
		d\zeta
	}{
		g (\varkappa \zeta))
	}
	\ge
	C
	\int_{
		r_{i_0}
	}^\infty
	r^{n - 1}
	q (r)
	\,
	dr.
$$
Thus, we arrive at a contradiction with~\eqref{T2.4.1} and~\eqref{T2.4.2}.
\end{proof}

For $F (x, \zeta) = a (x) h (\zeta)$, Theorem~\ref{T4.2} remains valid with~\eqref{4.9} replaced by
\begin{equation}
	f (r, \zeta)
	=
	\frac{
		h (\zeta)
	}{
		\left(
			\frac{
				1
			}{
				\operatorname{mes} B_{\theta r} \setminus B_{r / \theta}
			}
			\int_{
				B_{\theta r} \setminus B_{r / \theta}
			}
			a^{- \lambda} (x)
			\,
			dx
		\right)^{1 / \lambda}
	},
	\quad
	r, \zeta > 0,
	\label{4.9new}
\end{equation}
where $\lambda > 0$ is some real number. 
To see this, it suffices to repeat the reasoning given in paragraph~4.1. 
Really, from~\eqref{PT4.2.2}, it follows that
$$
	h (u (x))
	\ge
	h
	\left(
		\sigma
		\left(
			\frac{
				E (r_i)
			}{
				r_i^{n - p} 
			}
		\right)^{1 / (p - 1)}
	\right)
$$
for almost all $x \in \Omega_\varepsilon \cap B_{r_i}$, $i = 1,2,\ldots$. Consequently, we have
\begin{align*}
	&
	\frac{
		1
	}{
		\operatorname{mes} \Omega_\varepsilon \cap B_{r_i} \setminus B_{r_{i-1}}
	}
	\int_{
		\Omega_\varepsilon \cap B_{r_i} \setminus B_{r_{i-1}}
	}
	h^{\lambda / (1 + \lambda)} (u)
	\,
	dx
	\\
	&
	\qquad
	{}
	\ge
	h^{\lambda / (1 + \lambda)}
	\left(
		\sigma
		\left(
			\frac{
				E (r_i)
			}{
				r_i^{n - p} 
			}
		\right)^{1 / (p - 1)}
	\right),
	\quad
	i = 1,2,\ldots.
\end{align*}
Combining this with~\eqref{PT2.1.1} and the estimate
\begin{align*}
	&
	\int_{
		\Omega_\varepsilon \cap B_{r_i} \setminus B_{r_{i-1}}
	}
	h^{\lambda / (1 + \lambda)} (u)
	\,
	dx
	=
	\int_{
		\Omega_\varepsilon \cap B_{r_i} \setminus B_{r_{i-1}}
	}
	a^{- \lambda / (1 + \lambda)} (x)
	a^{\lambda / (1 + \lambda)} (x)
	h^{\lambda / (1 + \lambda)} (u)
	\,
	dx
	\\
	&
	\quad
	{}
	\le
	\left(
		\int_{
			\Omega_\varepsilon \cap B_{r_i} \setminus B_{r_{i-1}}
		}
		a^{- \lambda} (x)
		\,
		dx
	\right)^{1 / (1 + \lambda)}
	\left(
		\int_{
			\Omega_\varepsilon \cap B_{r_i} \setminus B_{r_{i-1}}
		}
		a (x)
		h (u)
		\,
		dx
	\right)^{\lambda / (1 + \lambda)}
\end{align*}
which follows from H\"older's inequality, we obtain
$$
	\int_{
		\Omega_\varepsilon \cap B_{r_i} \setminus B_{r_{i-1}}
	}
	a (x)
	h (u)
	\,
	dx
	\ge
	\frac{
		C
		r_i^n
		h
		\left(
			\sigma
			\left(
				\frac{
					E (r_i)
				}{
					r_i^{n - p} 
				}
			\right)^{1 / (p - 1)}
		\right)
	}{
		\left(
			\frac{
				1
			}{
				\operatorname{mes} B_{r_i} \setminus B_{r_{i-1}}
			}
			\int_{
				B_{r_i} \setminus B_{r_{i-1}}
			}
			a^{- \lambda} (x)
			\,
			dx
		\right)^{1 / \lambda}
	},
	\quad
	i = 1,2,\ldots.
$$
The last relation implies~\eqref{PT2.2.3} with
$$
	E (r)
	=
	\int_{
		\Omega_\varepsilon \cap B_r
	}
	a (x)
	h (u_1)
	\,
	dx,
	\quad
	r > 0,
$$
and $f$ defined by~\eqref{4.9new}.

Theorem~\ref{T4.2} can be strengthen if we assume that the function $F$ is non-negative and non-decreasing with respect to the last argument on the whole set ${\mathbb R}^n \times [0, \infty)$
and positive on ${\mathbb R}^n \times (0, \infty)$.
Namely, let for any real number $\zeta_* > 0$ there be a real number $r_* > 0$ and functions
$q : {\mathbb R}^n \setminus B_{r_*} \to [0, \infty)$
and
$g : [\zeta_*, \infty) \to (0, \infty)$
satisfying the following condition:
\begin{enumerate}
\item[$(c')$]
for almost all $x \in {\mathbb R}^n \setminus B_{r_*}$ and for all $\zeta \in [\zeta_*, \infty)$ such that
$$
	\left(
		\frac{
			\zeta
		}{
			|x|^{n - p}
		}
	\right)^{1 / (p - 1)}
	\le
	\varepsilon
$$
we have
$$
	F 
	\left( 
		x,
		\left(
			\frac{
				\zeta
			}{
				|x|^{n - p}
			}
		\right)^{1 / (p - 1)}
	\right)
	\ge
	q (x) 
	g (\zeta).
$$
\end{enumerate}
Here $\varepsilon \in (0, 1)$ can be an arbitrary real number. We should only assume that $\varepsilon$ does not depend on $\zeta_*$.

\begin{Theorem}\label{T4.3}
Suppose that $n > p$ and the function $F$ is non-negative and non-decreasing with respect to the last argument on the set ${\mathbb R}^n \times [0, \infty)$ and positive on ${\mathbb R}^n \times (0, \infty)$.
Also let for any real number $\zeta_* > 0$ there be a real number $r_* > 0$, a locally bounded measurable function
$q : {\mathbb R}^n \setminus B_{r_*} \to [0, \infty)$,
and a non-decreasing function 
$g : [\zeta_*, \infty) \to (0, \infty)$
satisfying condition~$(c')$. If~\eqref{T2.4.1} is valid and, moreover,
\begin{equation}
	\int_{
		{\mathbb R}^n \setminus B_{r_*}
	}
	q (x)
	\,
	dx
	=
	\infty,
	\label{T4.3.1}
\end{equation}
then any non-negative solution of~\eqref{4.7}, \eqref{4.8} is identically zero.
\end{Theorem}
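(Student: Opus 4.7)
The plan is to proceed by contradiction, following the scheme of the proof of Theorem~\ref{T4.1} specialized to a single inequality. Because $F$ is non-decreasing in its second argument on all of $[0,\infty)$, no truncation (like the $u_\varepsilon$ used in the proof of Theorem~\ref{T4.2}) is needed, and one can work with $u$ directly.

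Suppose that $u \ge 0$ is a solution of~\eqref{4.7}, \eqref{4.8} that is not identically zero. By Lemma~\ref{L3.2}, $u > 0$ almost everywhere in ${\mathbb R}^n$, and by~\eqref{4.8} there exists $r_0 > 0$ with $\operatorname*{ess\,inf}_{B_{r_0}} u < \varepsilon$. Setting $r_i = 2^i r_0$ for $i = 1, 2, \ldots$ and
$$
	E(r) = \int_{B_r} F(x, u)\, dx,
	\quad
	r > 0,
$$
and applying Lemmas~\ref{L3.2} and~\ref{L3.3} on $B_{r_i}$ with some $\lambda \in (p - 1, p) \cap (0, n (p - 1) / (n - p))$ exactly as at the opening of the proof of Theorem~\ref{T4.1}, one obtains
$$
	\operatorname*{ess\,inf}\limits_{B_{r_i}} u
	\ge
	\sigma \left(\frac{E(r_i)}{r_i^{n - p}}\right)^{1/(p - 1)},
	\quad
	i = 1, 2, \ldots.
$$

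Next I would fix a constant $\varkappa > 0$ (depending only on $n$, $p$, and $\sigma$) so that
$$
	\left(\frac{\varkappa E(r_i)}{|x|^{n - p}}\right)^{1/(p - 1)}
	\le
	\sigma \left(\frac{E(r_i)}{r_i^{n - p}}\right)^{1/(p - 1)}
$$
for every $x \in B_{r_i} \setminus B_{r_{i-1}}$; this is possible because $|x|/r_i \ge 1/2$ on this dyadic shell. Since $B_{r_0} \subset B_{r_i}$ for $i \ge 1$, the right-hand side is at most $\operatorname*{ess\,inf}_{B_{r_0}} u < \varepsilon$, so the smallness hypothesis in condition~$(c')$ is met with $\zeta = \varkappa E(r_i)$. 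Monotonicity of $F$ and then~$(c')$ yield
$$
	F(x, u(x))
	\ge
	F\left(x, \left(\frac{\varkappa E(r_i)}{|x|^{n - p}}\right)^{1/(p - 1)}\right)
	\ge
	q(x)\, g(\varkappa E(r_i))
$$
for almost all $x \in B_{r_i} \setminus B_{r_{i-1}}$ and all $i > i_0$, where $i_0$ is fixed so that $\varkappa E(r_{i_0}) \ge \zeta_*$ and $r_{i_0} \ge r_*$.

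To conclude, I would integrate this estimate over $B_{r_i} \setminus B_{r_{i-1}}$ to obtain
$$
	E(r_i) - E(r_{i - 1})
	\ge
	g(\varkappa E(r_i))\int_{B_{r_i} \setminus B_{r_{i-1}}} q(x)\, dx,
$$
and then apply the same telescoping step as at the end of the proof of Theorem~\ref{T4.1} (using monotonicity of $g$ and summing over $i > i_0$) to deduce
$$
	\int_{E(r_{i_0})}^\infty \frac{d\zeta}{g(\varkappa \zeta)}
	\ge
	\int_{{\mathbb R}^n \setminus B_{r_{i_0}}} q(x)\, dx
	=
	\infty,
$$
contradicting~\eqref{T2.4.1}. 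The only non-routine point is the uniform choice of $\varkappa$ that arranges the smallness condition of~$(c')$ for $\zeta = \varkappa E(r_i)$; this works because the weak Harnack inequality, together with the dyadic comparability $|x| \asymp r_i$ and the bound $\operatorname*{ess\,inf}_{B_{r_0}} u < \varepsilon$, keeps the argument of $F$ below $\varepsilon$.
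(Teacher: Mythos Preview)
Your proposal is correct and follows essentially the same route as the paper's own proof: contradiction, the lower bound $\operatorname*{ess\,inf}_{B_{r_i}} u \ge \sigma (E(r_i)/r_i^{n-p})^{1/(p-1)}$ from Lemmas~\ref{L3.2} and~\ref{L3.3}, monotonicity of $F$, condition~$(c')$ on the dyadic shells, and the telescoping sum against~\eqref{T2.4.1} and~\eqref{T4.3.1}. The only difference is cosmetic: you make explicit the choice of $\varkappa$ (via $|x|/r_i \ge 1/2$ on $B_{r_i}\setminus B_{r_{i-1}}$) and the verification that the argument of $F$ stays below $\varepsilon$, whereas the paper absorbs these into its generic constants $\sigma$, $\varkappa$ and the phrase ``taking into account condition~$(c')$''.
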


\begin{proof}
By contradiction, let there be a non-negative solution of~\eqref{4.7}, \eqref{4.8} that is not identically zero. According to Lemma~\ref{3.2}, this solution is positive almost everywhere in ${\mathbb R}^n$.
We put
$$
	E (r)
	=
	\int_{
		B_r
	}
	F (x, u)
	\,
	dx,
	\quad
	r > 0.
$$
By~\eqref{4.8}, there is a real number $r_0 > 0$ such that
$$
	\operatorname*{ess\,inf}\limits_{
		B_{r_0}
	}
	u
	<
	\varepsilon.
$$
We also denote $r_i = 2^i$, $i = 1,2,\ldots$.
Lemmas~\ref{L3.2} and~\ref{L3.3} lead to the estimate
$$
	\frac{
		1
	}{
		\mes B_{r_i}
	}
	\int_{
		B_{r_i}
	}
	F (x, u)
	\,
	dx
	\le
	C
	r_i^{- p}
	\left(
		\operatorname*{ess\,inf}\limits_{
			B_{r_i}
		}
		u
	\right)^{p - 1},
	\quad
	i = 1,2,\ldots,
$$
whence it follows that
$$
	\operatorname*{ess\,inf}\limits_{
		B_{r_i}
	}
	u
	\ge
	\sigma
	\left(
		\frac{
			E (r_i)
		}{
			r_i^{n - p} 
		}
	\right)^{1 / (p - 1)},
	\quad
	i = 1,2,\ldots.
$$
This in turn yields
$$
	F (x, u (x))
	\ge
	F
	\left(
		x,
		\operatorname*{ess\,inf}\limits_{
			B_{r_i}
		}
		u
	\right)
	\ge
	F
	\left(
		x,
		\sigma
		\left(
			\frac{
				E (r_i)
			}{
				r_i^{n - p} 
			}
		\right)^{1 / (p - 1)}
	\right)
$$
for almost all $x \in B_{r_i}$, $i = 1,2,\ldots$.
Hence, taking into account condition~$(c')$, we have
$$
	F (x, u (x))
	\ge
	q (x) 
	g (\varkappa E (r_i))
$$
for almost all $x \in B_{r_i} \setminus B_{r_{i-1}}$, $i = i_0 + 1, i_0 + 2,\ldots$,
where $i_0 \ge 0$ is some integer.
Integrating the last inequalities over $B_{r_i} \setminus B_{r_{i-1}}$, one can conclude that
$$
	E (r_i) - E (r_{i-1})
	\ge
	g (\varkappa E (r_i))
	\int_{
		B_{r_i} \setminus B_{r_{i-1}}
	}
	q (x)
	\,
	dx
$$
or, in other words,
$$
	\frac{
		E (r_i) - E (r_{i-1})
	}{
		g (\varkappa E (r_i))
	}
	\ge
	\int_{
		B_{r_i} \setminus B_{r_{i-1}}
	}
	q (x)
	\,
	dx
	\quad
	i = i_0 + 1, i_0 + 2, \ldots.
$$
Combining this with~\eqref{PT4.2.1}, we obtain
$$
	\int_{
		E (r_{i-1})
	}^{
		E (r_i)
	}
	\frac{
		d\zeta
	}{
		g (\varkappa \zeta))
	}
	\ge
	\int_{
		B_{r_i} \setminus B_{r_{i-1}}
	}
	q (x)
	\,
	dx
	\quad
	i = i_0 + 1, i_0 + 2, \ldots.
$$
Thus, summing the last expression over all $i > i_0 $, we arrive at the inequality
$$
	\int_{
		E (r_{i_0})
	}^\infty
	\frac{
		d\zeta
	}{
		g (\varkappa \zeta))
	}
	\ge
	C
	\int_{
		{\mathbb R}^n \setminus B_{r_{i_0}}
	}
	q (x)
	\,
	dx
$$
which contradicts~\eqref{T2.4.1} and~\eqref{T4.3.1}.
\end{proof}

\paragraph{4.4}
Consider the systems
\begin{equation}
	\left\{
		\begin{aligned}
			&
			- \operatorname{div} A_1 (x, \nabla u_1)
			\ge
			F_1 (x, u_1, u_2)
			&
			\mbox{in } {\mathbb R}^n,
			\\
			&
			- \operatorname{div} A_2 (x, \nabla u_2)
			\ge
			F_2 (x, u_1, u_2)
			&
			\mbox{in } {\mathbb R}^n,
		\end{aligned}
	\right.
	\label{4.4.1}
\end{equation}
where $A_1$ and $A_2$ are Caratheodory functions satisfying the uniform ellipticity condition~\eqref{ec} and, moreover, the functions $F_1$ and $F_2$ are non-negative and non-decreasing with respect to the last two arguments on the set ${\mathbb R}^n \times [0, \varepsilon] \times [0, \varepsilon]$ and positive on ${\mathbb R}^n \times (0, \varepsilon) \times (0, \varepsilon)$ for some real number $0 < \varepsilon < 1$.

We shall assume that for any real number $\zeta_* > 0$ there exist a real number $r_* > 0$ and functions
$q : [r_*, \infty) \to [0, \infty)$
and
$g : [\zeta_*, \infty) \to (0, \infty)$
satisfying the following condition:
\begin{enumerate}
\item[$(d)$]
for all $r \in [r_*, \infty)$ and $\zeta_1, \zeta_2 \in [\zeta_*, \infty)$ such that
$$
	\left(
		\frac{
			\zeta_1
		}{
			r^{n - p_1}
		}
	\right)^{1 / (p_1 - 1)}
	\le
	\varepsilon
	\quad
	\mbox{and}
	\quad
	\left(
		\frac{
			\zeta_2
		}{
			r^{n - p_2}
		}
	\right)^{1 / (p_2 - 1)}
	\le
	\varepsilon
$$
we have
$$
	f 
	\left( 
		r,
		\left(
			\frac{
				\zeta_1
			}{
				r^{n - p_1}
			}
		\right)^{1 / (p_1 - 1)},
		\left(
			\frac{
				\zeta_2
			}{
				r^{n - p_2}
			}
		\right)^{1 / (p_2 - 1)}
	\right)
	\ge
	q (r) 
	g (\zeta_1 + \zeta_2),
$$
\end{enumerate}
where
$$
	f (r, \zeta_1, \zeta_2)
	=
	\operatorname*{ess\,inf}\limits_{
		x \in B_{\theta r} \setminus B_{r / \theta}
	}
	(F_1 (x, \zeta_1, \zeta_2) + F_2 (x, \zeta_1, \zeta_2)),
	\quad
	r, \zeta_1, \zeta_2 > 0,
$$
with some given real number $\theta > 1$.

\begin{Theorem}\label{T4.4}
Suppose that $n > p$. 
Also let for any real number $\zeta_* > 0$ there exist a real number $r_* > 0$, a locally bounded measurable function
$q : [r_*, \infty) \to [0, \infty)$,
and a non-decreasing function 
$g : [\zeta_*, \infty) \to (0, \infty)$
satisfying conditions~$(d)$, \eqref{T2.4.1}, and~\eqref{T2.4.2}.
Then any non-negative solution of~\eqref{4.4.1}, \eqref{1.2} is identically zero.
\end{Theorem}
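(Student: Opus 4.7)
The plan is to follow the scheme of the proof of Theorem~\ref{T2.1} (in the spirit of Theorem~\ref{T4.2}), working with the combined energy
$$
	E(r) = E_1(r) + E_2(r),
	\quad
	E_k(r) = \int_{\Omega_\varepsilon \cap B_r} F_k (x, u_1, u_2) \, dx,
	\quad
	k = 1, 2,
$$
where $\Omega_\varepsilon = \Omega_{1,\varepsilon} \cap \Omega_{2,\varepsilon}$. Arguing by contradiction, let $(u_1, u_2)$ be a non-trivial non-negative solution of~\eqref{4.4.1}, \eqref{1.2}. By Lemma~\ref{L3.2} both $u_1$ and $u_2$ are positive almost everywhere, and by Lemma~\ref{L3.4} there is $r_0 > 0$ with $\mes (\Omega_\varepsilon \cap B_{\theta r} \setminus B_r) \ge C r^n$ for $r \ge r_0$; set $r_i = \theta^i r_0$.

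Applying Corollary~\ref{C3.2} to the $k$-th inequality of~\eqref{4.4.1} with $a(x) = F_k(x, u_1(x), u_2(x))$ and noting that $E_k(r) \le \int_{\Omega_{k,\varepsilon} \cap B_r} F_k \, dx$, I obtain, exactly as in~\eqref{PT2.1.2}--\eqref{PT2.1.3},
$$
	\operatorname*{ess\,inf}\limits_{\Omega_\varepsilon \cap B_{r_i}} u_k
	\ge
	\sigma
	\left(
		\frac{E_k(r_i)}{r_i^{n - p_k}}
	\right)^{1/(p_k - 1)},
	\quad
	k = 1, 2.
$$
Since $u_k < \varepsilon$ on $\Omega_\varepsilon$, the monotonicity of $F_1$ and $F_2$ in both last arguments on $[0,\varepsilon]^2$ gives, for a.e.\ $x \in \Omega_\varepsilon \cap B_{r_i} \setminus B_{r_{i-1}}$,
$$
	(F_1 + F_2)(x, u_1, u_2)
	\ge
	(F_1 + F_2)
	\left(
		x,
		\sigma (E_1(r_i)/r_i^{n-p_1})^{1/(p_1-1)},
		\sigma (E_2(r_i)/r_i^{n-p_2})^{1/(p_2-1)}
	\right).
$$
Taking the essential infimum over $x \in B_{\theta r} \setminus B_{r/\theta}$ for $r \in (r_{i-1}, r_i)$, averaging over $\Omega_\varepsilon \cap B_{r_i} \setminus B_{r_{i-1}}$, and invoking the volume bound yields
$$
	E(r_i) - E(r_{i-1})
	\ge
	C r^n
	f
	\left(
		r,
		\sigma (E_1(r_i)/r_i^{n-p_1})^{1/(p_1-1)},
		\sigma (E_2(r_i)/r_i^{n-p_2})^{1/(p_2-1)}
	\right)
$$
for all $r \in (r_{i-1}, r_i)$.

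To invoke condition~$(d)$, I rewrite the arguments of $f$ in the required form by setting $\zeta_k = \sigma^{p_k - 1} (r/r_i)^{n-p_k} E_k(r_i)$; since $r \asymp r_i$, one has $\zeta_k \ge \varkappa_k E_k(r_i)$ with positive constants $\varkappa_k$, and hence $\zeta_1 + \zeta_2 \ge \varkappa E(r_i)$ for $\varkappa = \min\{\varkappa_1, \varkappa_2\}$. The smallness constraints $(\zeta_k / r^{n-p_k})^{1/(p_k-1)} \le \varepsilon$ required by $(d)$ follow from the preceding ess-inf bound combined with $u_k < \varepsilon$ on $\Omega_\varepsilon$. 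Monotonicity of $g$ then gives
$$
	E(r_i) - E(r_{i-1})
	\ge
	C r^n q(r) g(\varkappa E(r_i)),
	\quad
	r \in (r_{i-1}, r_i),
	\quad
	i > i_0,
$$
for some integer $i_0$. From this point the argument is a verbatim repeat of the closing steps of the proofs of Theorems~\ref{T2.1} and~\ref{T4.2}: divide by $g(\varkappa E(r_i))$, bound the left-hand side by $\int_{E(r_{i-1})}^{E(r_i)} d\zeta / g(\varkappa \zeta)$, take the supremum on the right to dominate $C \int_{r_{i-1}}^{r_i} r^{n-1} q(r) \, dr$, sum over $i > i_0$, and reach a contradiction with~\eqref{T2.4.1} and~\eqref{T2.4.2}.

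The main obstacle is more bookkeeping than substance: reconciling the two a priori incompatible scalings $r^{n-p_1}$ and $r^{n-p_2}$ inside a single nonlinearity $g$ acting on the sum $E_1(r_i) + E_2(r_i)$. Condition~$(d)$ is designed precisely so that the sum $\zeta_1 + \zeta_2$ on its right-hand side matches $\varkappa E(r_i)$ after the substitution above, and the mismatch of scaling powers is absorbed by the universal constant $\varkappa$, exactly as in the corresponding step of the proof of Theorem~\ref{T2.1}.
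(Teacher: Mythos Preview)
Your proposal is correct and follows essentially the same approach as the paper's proof: the paper likewise defines $E_k(r)=\int_{\Omega_\varepsilon\cap B_r}F_k\,dx$, obtains the lower bounds \eqref{PT4.4.3}--\eqref{PT4.4.4} via Corollary~\ref{C3.2}, uses monotonicity of $F_1,F_2$ on $[0,\varepsilon]^2$ to pass to $f$, integrates over the annulus, applies condition~$(d)$ to reach \eqref{PT2.2.3} with $E=E_1+E_2$, and then refers back to the closing argument of Theorem~\ref{T4.2}. Your explicit handling of the substitution $\zeta_k=\sigma^{p_k-1}(r/r_i)^{n-p_k}E_k(r_i)$ and of the constant $\varkappa$ is in fact more detailed than the paper's, which simply says ``taking further condition~$(d)$ into account.''
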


\begin{proof}
Assume by contradiction that $(u_1, u_2)$ is a non-negative solution of~\eqref{4.4.1}, \eqref{1.2} not equal to zero.
In view of Lemma~\ref{L3.2}, the functions $u_1$ and $u_2$ are positive almost everywhere in ${\mathbb R}^n$.  
As in the proof of Theorem~\ref{T2.1}, we denote
$\Omega_\varepsilon = \Omega_{1, \varepsilon} \cap \Omega_{2, \varepsilon}$,
where
$
	\Omega_{k, \varepsilon}
	=
	\{
		x \in {\mathbb R}^n
		:
		u_k (x) < \varepsilon
	\}
$,
$k = 1,2$.
By Lemma~\ref{L3.4}, there is a real number $r_0 > 0$ such that~\eqref{PT2.1.1} holds for all $r \ge r_0$.
We put $r_i = \theta^i r_0$, $i = 1,2,\ldots$.
Also let 
$$
	E_1 (r)
	=
	\int_{
		\Omega_\varepsilon \cap B_r
	}
	F_1 (x, u_1, u_2)
	\,
	dx,
	\quad
	r > 0,
$$
and
$$
	E_2 (r)
	=
	\int_{
		\Omega_\varepsilon \cap B_r
	}
	F_2 (x, u_1, u_2)
	\,
	dx,
	\quad
	r > 0.
$$

Corollary~\ref{C3.2} allows us to assert that
$$
	\frac{
		1
	}{
		\mes B_{r_i}
	}
	\int_{
		\Omega_\varepsilon \cap B_{r_i}
	}
	F_1 (x, u_1, u_2)
	\,
	dx
	\le
	C
	r_i^{- p_1}
	\left(
		\operatorname*{ess\,inf}\limits_{
			B_{r_i}
		}
		u_{1, \varepsilon}
	\right)^{p_1 - 1},
	\quad
	i = 1,2,\ldots,
$$
and
$$
	\frac{
		1
	}{
		\mes B_{r_i}
	}
	\int_{
		\Omega_\varepsilon \cap B_{r_i}
	}
	F_2 (x, u_1, u_2)
	\,
	dx
	\le
	C
	r_i^{- p_2}
	\left(
		\operatorname*{ess\,inf}\limits_{
			B_{r_i}
		}
		u_{2, \varepsilon}
	\right)^{p_2 - 1},
	\quad
	i = 1,2,\ldots,
$$
where the functions $u_{k, \varepsilon}$, $k = 1,2$, are defined by~\eqref{PT2.1.8}.
This immediately yields
\begin{equation}
	\operatorname*{ess\,inf}\limits_{
		\Omega_{1, \varepsilon} \cap B_{r_i}
	}
	u_1
	=
	\operatorname*{ess\,inf}\limits_{
		B_{r_i}
	}
	u_{1, \varepsilon}
	\ge
	\sigma
	\left(
		\frac{
			E_1 (r_i)
		}{
			r_i^{n - p_1} 
		}
	\right)^{1 / (p_1 - 1)},
	\quad
	i = 1,2,\ldots,
	\label{PT4.4.3}
\end{equation}
and
\begin{equation}
	\operatorname*{ess\,inf}\limits_{
		\Omega_{2, \varepsilon} \cap B_{r_i}
	}
	u_2
	=
	\operatorname*{ess\,inf}\limits_{
		B_{r_i}
	}
	u_{2, \varepsilon}
	\ge
	\sigma
	\left(
		\frac{
			E_2 (r_i)
		}{
			r_i^{n - p_2} 
		}
	\right)^{1 / (p_2 - 1)},
	\quad
	i = 1,2,\ldots.
	\label{PT4.4.4}
\end{equation}
Hence, taking into account the fact that $F_1$ and $F_2$ are non-decreasing functions with respect to the last two arguments on the set ${\mathbb R}^n \times [0, \varepsilon] \times [0, \varepsilon]$, we obtain
\begin{align}
	&
	F_k (x, u_1 (x), u_2 (x))
	\ge
	F_k
	\left(
		x,
		\operatorname*{ess\,inf}\limits_{
			\Omega_{1, \varepsilon} \cap B_{r_i}
		}
		u_1,
		\operatorname*{ess\,inf}\limits_{
			\Omega_{2, \varepsilon} \cap B_{r_i}
		}
		u_2
	\right)
	\nonumber
	\\
	&
	\qquad
	{}
	\ge
	F_k
	\left(
		x,
		\sigma
		\left(
			\frac{
				E_1 (r_i)
			}{
				r_i^{n - p_1} 
			}
		\right)^{1 / (p_1 - 1)},
		\sigma
		\left(
			\frac{
				E_2 (r_i)
			}{
				r_i^{n - p_2} 
			}
		\right)^{1 / (p_2 - 1)}
	\right),
	\quad
	k = 1,2,
	\label{PT4.4.2}
\end{align}
for almost all $x \in \Omega_\varepsilon \cap B_{r_i}$, $i = 1,2,\ldots$.
This implies that
\begin{align*}
	&
	F_1 (x, u_1 (x), u_2 (x))
	+
	F_2 (x, u_1 (x), u_2 (x))
	\\
	&
	\qquad
	{}
	\ge
	f
	\left(
		r,
		\left(
			\frac{
				\varkappa
				E_1 (r_i)
			}{
				r^{n - p_1} 
			}
		\right)^{1 / (p_1 - 1)},
		\left(
			\frac{
				\varkappa
				E_2 (r_i)
			}{
				r^{n - p_2} 
			}
		\right)^{1 / (p_2 - 1)}
	\right)
\end{align*}
for almost all $x \in \Omega_\varepsilon \cap B_{r_i} \setminus B_{r_{i-1}}$ and for all $r \in (r_{i-1}, r_i)$, $i = 1, 2,\ldots$.
Integrating the last expression, we have
\begin{align*}
	&
	\int_{
		\Omega_\varepsilon \cap B_{r_i} \setminus B_{r_{i-1}}
	}
	(
		F_1 (x, u_1, u_2)
		+
		F_2 (x, u_1, u_2)
	)
	\,
	dx
	\\
	&
	\qquad
	{}
	\ge
	C
	r_i^n
	f
	\left(
		r,
		\left(
			\frac{
				\varkappa
				E_1 (r_i)
			}{
				r^{n - p_1} 
			}
		\right)^{1 / (p_1 - 1)},
		\left(
			\frac{
				\varkappa
				E_2 (r_i)
			}{
				r^{n - p_2} 
			}
		\right)^{1 / (p_2 - 1)}
	\right)
\end{align*}
for all $r \in (r_{i-1}, r_i)$, $i = 1,2,\ldots$. Taking further condition~$(d)$ into account, we arrive at estimate~\eqref{PT2.2.3} with
\begin{equation}
	E (r) = E_1 (r) + E_2 (r),
	\quad
	r > 0.
	\label{PT4.4.1}
\end{equation}
Thus, to complete the proof, it remains to repeat the argument given in the proof of Theorem~\ref{T4.2}.
\end{proof}

Theorem~\ref{T4.4} can be strengthen if the $F_1$ and $F_2$ are non-decreasing functions with respect to the last two arguments on the whole set ${\mathbb R}^n \times [0, \infty) \times [0, \infty)$.

In this case, it should be assumed that for any real number $\zeta_* > 0$ there exist a real number $r_* > 0$ and functions
$q : {\mathbb R}^n \setminus B_{r_*} \to [0, \infty)$
and
$g : [\zeta_*, \infty) \to (0, \infty)$
satisfying the following condition:
\begin{enumerate}
\item[$(d')$]
for almost all $x \in {\mathbb R}^n \setminus B_{r_*}$ and for all $\zeta_1, \zeta_2 \in [\zeta_*, \infty)$ such that
$$
	\left(
		\frac{
			\zeta_1
		}{
			|x|^{n - p_1}
		}
	\right)^{1 / (p_1 - 1)}
	\le
	\varepsilon
	\quad
	\mbox{and}
	\quad
	\left(
		\frac{
			\zeta_2
		}{
			|x|^{n - p_2}
		}
	\right)^{1 / (p_2 - 1)}
	\le
	\varepsilon
$$
we have
\begin{align*}
	&
	F_1 
	\left( 
		x,
		\left(
			\frac{
				\zeta_1
			}{
				|x|^{n - p_1}
			}
		\right)^{1 / (p_1 - 1)},
		\left(
			\frac{
				\zeta_2
			}{
				|x|^{n - p_2}
			}
		\right)^{1 / (p_2 - 1)}
	\right)
	\\
	&
	\qquad
	{}
	+
	F_2
	\left( 
		x,
		\left(
			\frac{
				\zeta_1
			}{
				|x|^{n - p_1}
			}
		\right)^{1 / (p_1 - 1)},
		\left(
			\frac{
				\zeta_2
			}{
				|x|^{n - p_2}
			}
		\right)^{1 / (p_2 - 1)}
	\right)
	\ge
	q (x) 
	g (\zeta_1 + \zeta_2).
\end{align*}
\end{enumerate}
In so doing, we do not make any assumptions regarding the real number $\varepsilon \in (0, 1)$, except that it does not depend on $\zeta_*$.

\begin{Theorem}\label{T4.5}
Suppose that $n > p$ and the functions $F_1$ and $F_2$ are non-negative and non-decreasing with respect to the last two arguments on the set ${\mathbb R}^n \times [0, \infty) \times [0, \infty)$ and positive on ${\mathbb R}^n \times (0, \infty) \times (0, \infty)$.
Also let for any real number $\zeta_* > 0$ there be a real number $r_* > 0$, a locally bounded measurable function
$q : {\mathbb R}^n \setminus B_{r_*} \to [0, \infty)$,
and a non-decreasing function 
$g : [\zeta_*, \infty) \to (0, \infty)$ satisfying conditions~$(d')$, \eqref{T2.4.1}, and~\eqref{T4.3.1}. Then any non-negative solution of~\eqref{4.4.1}, \eqref{1.2} is identically zero.
\end{Theorem}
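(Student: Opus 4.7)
The plan is to follow the pattern of Theorem~\ref{T4.3} (the single-inequality analogue with global monotonicity), adapted to the two-equation setting exactly as the proof of Theorem~\ref{T4.4} adapts that of Theorem~\ref{T4.2}. I would start by arguing by contradiction: assume $(u_1, u_2)$ is a nontrivial non-negative solution of~\eqref{4.4.1}, \eqref{1.2}, so that by Lemma~\ref{L3.2} both $u_1$ and $u_2$ are positive a.e.\ in ${\mathbb R}^n$. Because $F_1$ and $F_2$ are now non-decreasing on the whole set ${\mathbb R}^n \times [0, \infty) \times [0, \infty)$, there is no need to truncate by $\varepsilon$ as in the proof of Theorem~\ref{T4.4}; instead I would work on full balls, picking $r_0 > 0$ with $\operatorname*{ess\,inf}\limits_{B_{r_0}} u_k < \varepsilon$ for $k = 1, 2$ (which exists by~\eqref{1.2}), setting $r_i = 2^i r_0$, and defining
$$
	E_k(r) = \int_{B_r} F_k(x, u_1, u_2)\, dx,
	\quad
	E(r) = E_1(r) + E_2(r).
$$

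The next step is to apply Lemmas~\ref{L3.2} and~\ref{L3.3} (with $\lambda \in (p_k - 1, p_k) \cap (0, n(p_k - 1)/(n - p_k))$) to each of the two inequalities of~\eqref{4.4.1}, producing the lower bounds
$$
	\operatorname*{ess\,inf}\limits_{B_{r_i}} u_k
	\ge
	\sigma
	\left(
		\frac{E_k(r_i)}{r_i^{n - p_k}}
	\right)^{1 / (p_k - 1)},
	\quad k = 1, 2,
$$
exactly as in the proofs of Theorems~\ref{T4.3} and~\ref{T4.4}. Invoking the global monotonicity of $F_k$ in its last two arguments, I would substitute these estimates into $F_k(x, u_1(x), u_2(x))$ to obtain an a.e.\ pointwise lower bound on $B_{r_i}$ in terms of $E_1(r_i)$ and $E_2(r_i)$.

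I expect the only genuinely delicate point to be the passage from this bound to one that is directly compatible with condition~$(d')$, which is phrased in terms of $|x|^{n - p_k}$ rather than $r_i^{n - p_k}$. Since on the annulus $B_{r_i} \setminus B_{r_{i-1}}$ one has $r_i/|x| \in (1, 2]$, a single sufficiently small constant $\varkappa > 0$ can be chosen so that $\sigma (E_k(r_i)/r_i^{n - p_k})^{1/(p_k - 1)} \ge (\varkappa E_k(r_i)/|x|^{n - p_k})^{1/(p_k - 1)}$ throughout this annulus for both $k = 1, 2$; and the smallness hypothesis $(\varkappa E_k(r_i)/|x|^{n - p_k})^{1/(p_k - 1)} \le \varepsilon$ required by $(d')$ follows from $\operatorname*{ess\,inf}\limits_{B_{r_i}} u_k \le \operatorname*{ess\,inf}\limits_{B_{r_0}} u_k < \varepsilon$ together with the lower bound above. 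Summing the resulting pointwise estimates over $k = 1, 2$ and applying $(d')$ then yields
$$
	F_1(x, u_1, u_2) + F_2(x, u_1, u_2)
	\ge
	q(x)\, g(\varkappa E(r_i))
$$
for a.e.\ $x \in B_{r_i} \setminus B_{r_{i-1}}$ and all sufficiently large $i$, say $i > i_0$.

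To finish, I would integrate this estimate over the annulus to obtain $E(r_i) - E(r_{i-1}) \ge g(\varkappa E(r_i)) \int_{B_{r_i} \setminus B_{r_{i-1}}} q(x)\, dx$, and then repeat verbatim the telescoping argument at the end of the proof of Theorem~\ref{T4.3}: using $\int_{E(r_{i-1})}^{E(r_i)} d\zeta / g(\varkappa \zeta) \ge (E(r_i) - E(r_{i-1}))/g(\varkappa E(r_i))$ and summing over all $i > i_0$, one arrives at
$$
	\int_{E(r_{i_0})}^\infty \frac{d\zeta}{g(\varkappa \zeta)}
	\ge
	\int_{{\mathbb R}^n \setminus B_{r_{i_0}}} q(x)\, dx,
$$
contradicting~\eqref{T2.4.1} and~\eqref{T4.3.1}. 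Apart from the scaling reconciliation between $r_i$ and $|x|$ noted above, every step is a direct combination of ingredients already used in Theorems~\ref{T4.3} and~\ref{T4.4}.
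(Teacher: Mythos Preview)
Your proposal is correct and follows essentially the same route as the paper's own proof: argue by contradiction, obtain the lower bounds $\operatorname*{ess\,inf}_{B_{r_i}} u_k \ge \sigma (E_k(r_i)/r_i^{n-p_k})^{1/(p_k-1)}$ from Lemmas~\ref{L3.2} and~\ref{L3.3}, use the global monotonicity of $F_1, F_2$ together with condition~$(d')$ on annuli $B_{r_i} \setminus B_{r_{i-1}}$ to get $F_1 + F_2 \ge q(x)\,g(\varkappa E(r_i))$, integrate, and telescope exactly as in Theorem~\ref{T4.3}. Your write-up is in fact a bit more careful than the paper's in spelling out the $r_i$-to-$|x|$ reconciliation and the smallness check needed to invoke~$(d')$.
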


\begin{proof}
By contradiction, let $(u_1, u_2)$ be a non-negative solution of~\eqref{4.4.1}, \eqref{1.2} that is not equal to zero.
According to Lemma~\ref{L3.2}, both the functions $u_1$ and $u_2$ are positive almost everywhere in ${\mathbb R}^n$. 
In vie of~\eqref{1.2}, there exists a real number $r_0 > 0$ such that~\eqref{PT4.1.1} is valid for all $r \ge r_0$.  
Also let $r_i = \theta^i r_0$, $i = 1,2,\ldots$.

We denote
$$
	E_1 (r)
	=
	\int_{
		B_r
	}
	F_1 (x, u_1, u_2)
	\,
	dx,
	\quad
	r > 0,
$$
and
$$
	E_2 (r)
	=
	\int_{
		B_r
	}
	F_2 (x, u_1, u_2)
	\,
	dx,
	\quad
	r > 0.
$$

Corollary~\ref{C3.2} implies the estimates
$$
	\frac{
		1
	}{
		\mes B_{r_i}
	}
	\int_{
		B_{r_i}
	}
	F_1 (x, u_1, u_2)
	\,
	dx
	\le
	C
	r_i^{- p_1}
	\left(
		\operatorname*{ess\,inf}\limits_{
			B_{r_i}
		}
		u_1
	\right)^{p_1 - 1},
	\quad
	i = 1,2,\ldots,
$$
and
$$
	\frac{
		1
	}{
		\mes B_{r_i}
	}
	\int_{
		B_{r_i}
	}
	F_2 (x, u_1, u_2)
	\,
	dx
	\le
	C
	r_i^{- p_2}
	\left(
		\operatorname*{ess\,inf}\limits_{
			B_{r_i}
		}
		u_2
	\right)^{p_2 - 1},
	\quad
	i = 1,2,\ldots,
$$
whence it follows that
$$
	\operatorname*{ess\,inf}\limits_{
		B_{r_i}
	}
	u_1
	\ge
	\sigma
	\left(
		\frac{
			E_1 (r_i)
		}{
			r_i^{n - p_1} 
		}
	\right)^{1 / (p_1 - 1)},
	\quad
	i = 1,2,\ldots,
$$
and
$$
	\operatorname*{ess\,inf}\limits_{
		B_{r_i}
	}
	u_2
	\ge
	\sigma
	\left(
		\frac{
			E_2 (r_i)
		}{
			r_i^{n - p_2} 
		}
	\right)^{1 / (p_2 - 1)},
	\quad
	i = 1,2,\ldots.
$$
Taking into account the fact that $F_1$ and $F_2$ are non-decreasing functions with respect to the last two arguments on the set ${\mathbb R}^n \times [0, \infty) \times [0, \infty)$, we have
\begin{align*}
	F_k (x, u_1 (x), u_2 (x))
	&
	{}
	\ge
	F_k
	\left(
		x,
		\operatorname*{ess\,inf}\limits_{
			B_{r_i}
		}
		u_1,
		\operatorname*{ess\,inf}\limits_{
			B_{r_i}
		}
		u_2
	\right)
	\\
	&
	{}
	\ge
	F_k
	\left(
		x,
		\sigma
		\left(
			\frac{
				E_1 (r_i)
			}{
				r_i^{n - p_1} 
			}
		\right)^{1 / (p_1 - 1)},
		\sigma
		\left(
			\frac{
				E_2 (r_i)
			}{
				r_i^{n - p_2} 
			}
		\right)^{1 / (p_2 - 1)}
	\right),
	\quad
	k = 1,2,
\end{align*}
for almost all $x \in B_{r_i} \setminus B_{r_{i-1}}$, $i = 1,2,\ldots$.
By condition~$(d')$, this implies that
$$
	F_1 (x, u_1 (x), u_2 (x))
	+
	F_2 (x, u_1 (x), u_2 (x))
	\ge
	q (x)
	g (\varkappa (E_1 (r_i) + E_2 (r_i)))
$$
for almost all $x \in B_{r_i} \setminus B_{r_{i-1}}$, $i = 1,2,\ldots$.
Integrating the last inequality over $B_{r_i} \setminus B_{r_{i-1}}$, we obtain~\eqref{PT2.2.3} with $E$ defined by~\eqref{PT4.4.1}.
Thus, it remains to repeat the argument given in the proof of Theorem~\ref{T4.2}.
\end{proof}

\paragraph{4.5} 
In~\eqref{4.4.1}, let the functions $F_1$ and $F_2$ do not depend on $x$. In other words, we deal with the system
\begin{equation}
	\left\{
		\begin{aligned}
			&
			- \operatorname{div} A_1 (x, \nabla u_1)
			\ge
			F_1 (u_1, u_2)
			&
			\mbox{in } {\mathbb R}^n,
			\\
			&
			- \operatorname{div} A_2 (x, \nabla u_2)
			\ge
			F_2 (u_1, u_2)
			&
			\mbox{in } {\mathbb R}^n,
		\end{aligned}
	\right.
	\label{4.5.1}
\end{equation}
where $A_1$ and $A_2$ are Caratheodory functions satisfying the uniform ellipticity condition~\eqref{ec}. In so doing, the functions $F_1$ and $F_2$ are non-negative and non-decreasing with respect to every of their arguments on the set $[0, \varepsilon] \times [0, \varepsilon]$ and positive on $(0, \varepsilon) \times (0, \varepsilon)$, where $0 < \varepsilon < 1$ is some real number.

We shall assume that for any real number $\zeta_* > 0$ there exist real numbers $r_* > 0$ and $p > 1$ and a function
$f : (0, \infty) \to (0, \infty)$
satisfying the following condition:
\begin{enumerate}
\item[$(e)$]
for all $r \in [r_*, \infty)$ and $\zeta_1, \zeta_2 \in [\zeta_*, \infty)$ such that
$$
	\left(
		\frac{
			\zeta_1
		}{
			r^{n - p_1}
		}
	\right)^{1 / (p_1 - 1)}
	\le
	\varepsilon
	\quad
	\mbox{and}
	\quad
	\left(
		\frac{
			\zeta_2
		}{
			|x|^{n - p_2}
		}
	\right)^{1 / (p_2 - 1)}
	\le
	\varepsilon
$$
we have
\begin{align*}
	&
	F_1 
	\left( 
		\left(
			\frac{
				\zeta_1
			}{
				r^{n - p_1}
			}
		\right)^{1 / (p_1 - 1)},
		\left(
			\frac{
				\zeta_2
			}{
				r^{n - p_2}
			}
		\right)^{1 / (p_2 - 1)}
	\right)
	\\
	&
	\qquad
	{}
	+
	F_2
	\left( 
		\left(
			\frac{
				\zeta_1
			}{
				r^{n - p_1}
			}
		\right)^{1 / (p_1 - 1)},
		\left(
			\frac{
				\zeta_2
			}{
				r^{n - p_2}
			}
		\right)^{1 / (p_2 - 1)}
	\right)
	\ge
	f
	\left(
		\left(
			\frac{
				\zeta_1 + \zeta_2
			}{
				r^{n - p}
			}
		\right)^{1 / (p - 1)}
	\right).
\end{align*}
\end{enumerate}

\begin{Theorem}\label{T4.6}
Suppose that $n > \operatorname{max} \{ p_1, p_2 \}$. 
Also let for any real number $\zeta_* > 0$ there exist real numbers $r_* > 0$ 
and $p > 1$ and a non-decreasing function
$f : (0, \infty) \to (0, \infty)$ 
satisfying condition~$(e)$ and, moreover,
$$
	\int_0^\varepsilon
	\frac{
		f (\zeta)
		\,
		d\zeta
	}{
		\zeta^{1 + n (p - 1) / (n - p)}
	}
	=
	\infty.
$$
Then any non-negative solution of~\eqref{4.5.1}, \eqref{1.2} is identically zero.
\end{Theorem}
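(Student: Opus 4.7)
My plan is to follow the template of Theorems~\ref{T4.4} and~\ref{T4.5}: argue by contradiction, assuming $(u_1, u_2)$ is a non-trivial non-negative solution of~\eqref{4.5.1}, \eqref{1.2} (so that both $u_k$ are positive almost everywhere in $\mathbb{R}^n$ by Lemma~\ref{L3.2}), and work with $\Omega_\varepsilon = \Omega_{1,\varepsilon} \cap \Omega_{2,\varepsilon}$ and $r_0$ chosen via Lemma~\ref{L3.4} so that \eqref{PT2.1.1} holds for $r \ge r_0$. Setting $r_i = \theta^i r_0$ and
\[
E_k(r) = \int_{\Omega_\varepsilon \cap B_r} F_k(u_1,u_2)\,dx, \qquad E = E_1 + E_2,
\]
Corollary~\ref{C3.2} will give the ess-inf bounds~\eqref{PT4.4.3}, \eqref{PT4.4.4}. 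Combining these with the monotonicity of $F_k$ in the last two arguments on $[0,\varepsilon]^2$, integrating over $\Omega_\varepsilon \cap (B_{r_i}\setminus B_{r_{i-1}})$, summing $k=1,2$, and invoking condition~$(e)$ with $\zeta_j = \varkappa E_j(r_i)$ will produce the single-variable recursion
\[
E(r_i) - E(r_{i-1}) \ge C\, r_i^n\, f(\zeta_i), \qquad \zeta_i := \bigl(\varkappa E(r_i)/r_i^{n-p}\bigr)^{1/(p-1)},\qquad i > i_0.
\]

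The endgame rests on the scaling identity
\[
r^n \zeta^{\alpha-1} = (\varkappa E)^{n/(n-p)}, \qquad \alpha := 1 + \frac{n(p-1)}{n-p} = \frac{p(n-1)}{n-p},
\]
which is immediate from $\zeta^{p-1} r^{n-p} = \varkappa E$. Rewriting the recursion in continuous form as $(n-p)\zeta + (p-1)r\zeta' \ge C r^p f(\zeta)/\zeta^{p-2}$, I would multiply by $\zeta^{\alpha-2}$ and use $(r^n V)' = r^{n-1}(nV + rV')$ with $V = \zeta^{\alpha-1}$---whose coefficients match those of the inequality precisely because $\alpha - 1 = n(p-1)/(n-p)$---to derive
\[
\frac{d}{dr}\bigl[(\varkappa E(r))^{n/(n-p)}\bigr] \ge C\, r^{n+p-1} f(\zeta)\, \zeta^{\alpha-p}.
\]
Before integrating this, I would establish $\zeta_i \to 0$ (otherwise $f(\zeta_i) \ge f(\delta) > 0$ infinitely often by monotonicity of $f$, forcing $E(r_i) \ge C r_i^n f(\delta)$ and violating the polynomial ceiling $E(r) \le C r^{n-p_{\min}}$ with $p_{\min} = \min\{p_1,p_2\}$ from Corollary~\ref{C3.2}). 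Integrating the displayed inequality from $r_{i_0}$ to $R$ and converting the $r$-integral on the right into a $\zeta$-integral should yield an estimate of the shape
\[
(\varkappa E(R))^{n/(n-p)} \ge C \int_{\zeta(R)}^{\zeta(r_{i_0})} \frac{f(\zeta)}{\zeta^\alpha}\, d\zeta,
\]
whose right-hand side tends to $+\infty$ as $R \to \infty$ (so $\zeta(R) \to 0$) by the standing hypothesis, while the polynomial ceiling on $E$ caps the left-hand side; this is the contradiction.

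The main obstacle I expect is justifying the change of variable $r \mapsto \zeta$ on the right-hand side in the general, non-balanced case. In the balanced regime $\zeta(r) \asymp r^{-(n-p)/(p-1)}$ the scaling is exact and the substitution is a direct computation; in general, I would exploit the a priori bound $|r\zeta'/\zeta| \le (n-p)/(p-1)$ coming from the continuous counterpart of $\zeta_i^{p-1} \ge \theta^{-(n-p)}\zeta_{i-1}^{p-1}$ (itself a consequence of monotonicity of $E$), together with the monotonicity of $f$. That $\alpha = p(n-1)/(n-p)$ is the correct critical exponent is encoded in the scale-invariance of the identity $r^n\zeta^{\alpha-1} = (\varkappa E)^{n/(n-p)}$; this is exactly the Sobolev critical exponent for the scalar $p$-Laplace problem, and its appearance here is why the stated integral condition is the natural criterion.
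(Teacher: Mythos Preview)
Your argument up to and including the recursion
\[
E(r_i) - E(r_{i-1}) \;\ge\; C\, r_i^{\,n}\, f\!\left(\varkappa\Bigl(\tfrac{E(r_i)}{r_i^{\,n-p}}\Bigr)^{1/(p-1)}\right), \qquad i \ge i_0,
\]
with $E = E_1 + E_2$, is correct and coincides with the paper's proof verbatim: the same contradiction hypothesis, the same use of Lemma~\ref{L3.4} to secure~\eqref{PT2.1.1}, the same appeal to Corollary~\ref{C3.2} for~\eqref{PT4.4.3}--\eqref{PT4.4.4}, and the same invocation of condition~$(e)$. One small omission: the paper pauses to record that the recursion forces $n > p$ (otherwise $E(r_i)/r_i^{\,n-p}$ stays bounded below and the recursion yields $E(r_i) \gtrsim r_i^{\,n}$, contradicting the ceiling $E(r_i) \le C r_i^{\,n-p_{\min}}$ from~\eqref{PT4.4.3}--\eqref{PT4.4.4}); your $\zeta_i \to 0$ argument contains this implicitly, but since you use $n > p$ in defining $\alpha$, it is worth stating.

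For the endgame the paper does \emph{not} argue further: it simply invokes~\cite[Theorem~2.1]{arXiv}, which handles exactly this scalar recursion under the hypothesis $\int_0^\varepsilon f(\zeta)\zeta^{-\alpha}\,d\zeta = \infty$. So there is no in-paper endgame to compare against; you have gone further than the paper by sketching one. Your scaling identity $r^{\,n}\zeta^{\alpha-1} = (\varkappa E)^{n/(n-p)}$ is correct and is the heart of the matter, and the ingredients you list (the bounded ratio $\zeta_{i-1}/\zeta_i \le \theta^{(n-p)/(p-1)}$ from monotonicity of $E$, the monotonicity of $f$, and $\zeta_i \to 0$) are exactly the right ones. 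The weak point, which you yourself flag, is the passage to a ``continuous form'': $E$ is merely monotone, $\zeta$ is defined only at $r_i$, and the change of variable $r \mapsto \zeta$ is not a substitution in the ordinary sense. A cleaner route that stays discrete is to use your identity to rewrite the recursion as
\[
\frac{E(r_i) - E(r_{i-1})}{E(r_i)^{\,n/(n-p)}} \;\ge\; C\, f(\zeta_i)\,\zeta_i^{\,1-\alpha},
\]
telescope the left side via $E(r_{i-1})^{-p/(n-p)} - E(r_i)^{-p/(n-p)} \ge \tfrac{p}{n-p}\,(E(r_i)-E(r_{i-1}))\,E(r_i)^{-n/(n-p)}$ to get a finite total, and compare the right side to a Riemann sum for $\int_0 f(\zeta)\zeta^{-\alpha}\,d\zeta$ using that the $\zeta_i$ cannot skip over any dyadic block $[M^{-j-1}\zeta_{i_0},\,M^{-j}\zeta_{i_0}]$ on their way to $0$ (a consequence of the bounded-ratio inequality). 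This is presumably the content of the cited external result.
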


\begin{proof}
Let us follow the same argument and notation as in the proof of Theorem~\ref{T4.4}. The only difference is that $F_1$ and $F_2$ are now independent of the spatial variable.
Taking into account~\eqref{PT4.4.2}, we have
\begin{align*}
	&
	F_1 (u_1 (x), u_2 (x))
	+
	F_1 (u_1 (x), u_2 (x))
	\\
	&
	\quad
	{}
	\ge
	F_1
	\left(
		\sigma
		\left(
			\frac{
				E_1 (r_i)
			}{
				r_i^{n - p_1} 
			}
		\right)^{1 / (p_1 - 1)},
		\sigma
		\left(
			\frac{
				E_2 (r_i)
			}{
				r_i^{n - p_2} 
			}
		\right)^{1 / (p_2 - 1)}
	\right)
	\\
	&
	\qquad
	{}
	+
	F_2
	\left(
		\sigma
		\left(
			\frac{
				E_1 (r_i)
			}{
				r_i^{n - p_1} 
			}
		\right)^{1 / (p_1 - 1)},
		\sigma
		\left(
			\frac{
				E_2 (r_i)
			}{
				r_i^{n - p_2} 
			}
		\right)^{1 / (p_2 - 1)}
	\right)
\end{align*}
for almost all $x \in \Omega_\varepsilon \cap B_{r_i}$, $i = 1,2,\ldots$.
In view of condition~$(e)$, this implies that
$$
	F_1 (u_1 (x), u_2 (x))
	+
	F_1 (u_1 (x), u_2 (x))
	\ge
	f
	\left(
		\varkappa
		\left(
			\frac{
				E_1 (r_i) + E_2 (r_i)
			}{
				r_i^{n - p}
			}
		\right)^{1 / (p - 1)}
	\right)
$$
for almost all $x \in \Omega_\varepsilon \cap B_{r_i}$, $i = 1,2,\ldots$.
Integrating the last relation over $\Omega_\varepsilon \cap B_{r_i} \setminus B_{r_{i-1}}$, we arrive in accordance with~\eqref{PT2.1.1} at the following estimate:
$$
	E (r_i) - E (r_{i-1})
	\ge
	C
	r_i^n
	f
	\left(
		\varkappa
		\left(
			\frac{
				E (r_i)
			}{
				r_i^{n - p}
			}
		\right)^{1 / (p - 1)}
	\right),
	\quad
	i = 1,2,\ldots,
$$
where the function $E$ is defined by~\eqref{PT4.4.1}.
In particular, one can assert that $n > p$; otherwise we obtain
$$
	\operatorname*{lim\,inf}\limits_{i \to \infty}
	\frac{
		E (r_i)
	}{
		r_i^n
	}
	\ge
	C
	\operatorname*{lim\,inf}\limits_{i \to \infty}
	f
	\left(
		\varkappa
		\left(
			\frac{
				E (r_i)
			}{
				r_i^{n - p}
			}
		\right)^{1 / (p - 1)}
	\right)
	>
	0,
$$
which contradicts~\eqref{PT4.4.3} and~\eqref{PT4.4.4}.
Thus, to complete the proof, it suffices to repeat the argument given in the proof of~\cite[Theorem~2.1]{arXiv}.
\end{proof}

\paragraph{4.6} 
Let us note in conclusion that the method outlined in our paper is obviously suitable for systems containing more then two inequalities. It is also not difficult to transfer all the above results to Carnot groups.

\end{document}